\newtheorem{theorem}{Theorem}[section]
\newtheorem{corollary}{Corollary}[section]
\newtheorem{proposition}{Proposition}[section]
\newtheorem{lemma}{Lemma}[section]
\newtheorem{definition}{Definition}[section]
\newtheorem{remark}{Remark}[section]
\newtheorem{example}{Example}[section]
\newenvironment{proof}{{\noindent\it Proof.}\quad}{\hfill $\square$\\}
\begin{document}
\title{The algebraic structure of hyperinterpolation class on the sphere}

\author{Congpei An\footnotemark[1]
%\footnotemark[2]
       \quad \text{and}\quad Jiashu Ran\footnotemark[2]}
\renewcommand{\thefootnote}{\fnsymbol{footnote}}
\footnotetext[1]{School of Mathematics and Statistics, Guizhou University, Guiyang 550025, Guizhou, China (andbachcp@gmail.com)}
\footnotetext[2]{Department of Mathematical and Statistical Sciences, University of Alberta, Edmonton, Alberta T6G 2G1, Canada (jran@ualberta.ca),}
\maketitle

\begin{abstract}
This paper investigates the algebraic properties of the hyperinterpolation class $\mathbf{HC}(\mathbb{S}^d)$ on the unit sphere \( \mathbb{S}^d \). We focus on operators derived from the classical hyperinterpolation with bounded \( L_2 \) operator norms. By utilizing a discrete (semi) inner product framework, we develop the theory of hyper self-adjoint operators, hyper projection operators, and hyper semigroups. We analyze four specific operators: filtered, Lasso, hard thresholding, and generalized hyperinterpolations. We prove that the generalized hyperinterpolation operator is hyper self-adjoint and commutative with the hyperinterpolation operator. Additionally, we demonstrate that hard thresholding and classical hyperinterpolation operators form a hyper semigroup, with hard thresholding hyperinterpolation constituting the minimal prime hyper ideal. Finally, we establish that hyperinterpolation operators act as hyper homomorphisms on the hyper semigroup.
\end{abstract}

\textbf{Keywords: }{hyperinterpolation class, projection, semigroups, ideals}

\textbf{AMS subject classifications.} 41A10, 41A36, 47L20, 47L80

\section{Introduction}
In this paper, we explore the algebraic properties of hyperinterpolation class $\mathbf{HC}(\mathbb{S}^d)$ on the unit sphere $\mathbb{S}^d \subset \mathbb{R}^{d+1}$ and extend concepts and results from functional analysis into the realm of hyperinterpolation. Hyperinterpolation, initially introduced by Sloan in 1995 \cite{sloan1995hyperinterpolation}, offers a robust and efficient approximation for continuous functions in high-dimensional settings. It relies on high-order quadrature rules to approximate truncated Fourier expansions corresponding to orthonormal polynomial bases over compact subsets or manifolds \cite{sloan1995hyperinterpolation}.

Various forms of hyperinterpolation have been developed \cite{an2023hard,an2023hybrid,an2021lasso,dai2006GeneralizedHyperinterpolation,lin2021distributed,montufar2022distributed,pieper2009vector,reimer2002GeneralizedHyperinterpolation,sloan2012filtered}, broadening the applicability of hyperinterpolation across different scenarios. Recent advancements, such as relaxing or bypassing quadrature exactness assumptions via the Marcinkiewicz-Zygmund property \cite{an2022quadrature,an2024bypassing}, have further enhanced the utility of hyperinterpolation. Extensive research has been conducted on hyperinterpolation \cite{an2024efficient,caliari2007hyperinterpolation,caliari2008hyperinterpolation,de2009new,sommariva2014multivariate,atkinson2009norm,LeGia2001uniform,zbMATH01421286,sommariva2017nonstandard,sommariva2021sphericaltri,wang2014norm}. This paper focuses on the algebraic aspects of $\mathbf{HC}(\mathbb{S}^d)$ on the sphere $\mathbb{S}^d$, introducing concepts like hyper operator norms, hyper self-adjoint operators, and hyper projection operators.

Projection plays a major role in Hilbert spaces for certain problems, such as spectral theorem \cite{rudin2011functional}. Since hyperinterpolation operators are identified as projection operators that provide unique solutions to weighted least squares approximation problems \cite{sloan1995hyperinterpolation}, understanding their algebraic structure becomes crucial. We establish properties of hyper projection operators using a discrete (semi) inner product framework, including their behavior under product, sum, and difference operations. We also propose the concept of hyper semigroup over the set of continuous functions on the sphere, highlighting elements that form a semigroup and satisfy Pythagorean theorem. We prove that hard thresholding hyperinterpolation operators and hyperinterpolation operators belong to hyper semigroup.

Additionally, we delve into ideals and homomorphisms of $\mathbf{HC}(\mathbb{S}^d)$, identifying hyper semigroup formed by hard thresholding hyperinterpolation operators as the minimal prime hyper ideal of a broader hyper semigroup consisting of hard thresholding hyperinterpolation and hyperinterpolation operators. We  present the concept of hyper homomorphism and establish that hyperinterpolation operators serve as hyper homomorphisms on hyper semigroup.

In the sequel, we give necessary background information on polynomial spaces, quadrature rules and introduce some variants of hyperinterpolation.  After mathematical preliminaries in Section \ref{sec:setting}, a series of definitions pertinent to hyper projection is proposed in Section \ref{sec:projection}. As illustrated in Section \ref{sec:hyper_semigroup}, we present the concept of hyper semigroup and its properties. In Section \ref{sec4}, we analyze the operations on hyper projection operators, including the product, sum, and difference. Hyper ideals and hyper homomorphisms on hyper semigroups are defined and analyzed in Section \ref{sec:ideals_homomorphism}.

\section{Backgrounds on hyperinterpolation}\label{sec:setting}
Let $\mathbb{S}^{d}:=\left \{\mathbf{x} \in \mathbb{R}^{d+1}: \|\mathbf{x}\|_2 =1 \right \}$ with $d \geq 2$ be the unit sphere in $\mathbb{R}^{d+1}$, and let the space $L_2=L_2(\mathbb{S}^d)$ be the usual Hilbert space of square-integrable functions on
$\mathbb{S}^d$ with the inner product
\begin{equation*}
    \langle f,g \rangle_{L_2} = \int_{\mathbb{S}^d} f(\mathbf{x})g (\mathbf{x}){{{\rm{d}}\omega_d(\mathbf{x})}} \qquad \forall f,g \in L_2(\mathbb{S}^d),
\end{equation*}
and the induced norm $\|f\|_{L_2}:=\langle f,f \rangle^{1/2}_{L_2}$, where $\omega_d(\mathbf{x})$ denotes the Lebesgue surface measure on $\mathbb{S}^d$. The surface area of $\mathbb{S}^d$ is denoted by $\omega_d$,
\begin{equation*}
    \omega_d:=\frac{2\pi^{\frac{d+1}{2}}}{\Gamma\left(\frac{d+1}{2} \right)},
\end{equation*}
where $\Gamma(n):=(n-1)!$ for positive integers $n$. Let $\mathbb{P}_{n}(\mathbb{S}^d) \subset L_2(\mathbb{S}^d)$ be a linear space of all \emph{spherical polynomial} on $\mathbb{S}^d$ of total-degree not exceeding $n$. Note that the restriction of any real harmonic homogeneous polynomial on $\mathbb{R}^{d+1}$
of exact degree $\ell$ to $\mathbb{S}^d$ is called a real spherical harmonic of degree $\ell$. Let $\mathbb{H}_{\ell}(\mathbb{S}^d)$ be the space
of spherical harmonics of degree $\ell$ with the dimension $Z(d,\ell):=\dim(\mathbb{H}_{\ell}(\mathbb{S}^d))$  given by
\begin{equation*}
    Z(d,0):=1, \quad Z(d,\ell):=\frac{(2\ell + d-1)(\ell+d-2)!}{(d-1)!\ell!}, \quad \ell \in \mathbb{N}.
\end{equation*}
We denote by
\begin{equation}\label{equ:spherica_harmonics}
 \left \{ Y_{\ell k}^{(d)} : k=1,\cdots, Z(d,\ell) \right \}
\end{equation}
a given $L_2(\mathbb{S}^d)$-orthonormal system of real spherical harmonics of degree $\ell$ \cite{muller1966spherical}. Then \eqref{equ:spherica_harmonics}
is an $L_2(\mathbb{S}^d)$-orthonormal basis for $\mathbb{H}_{\ell}(\mathbb{S}^d)$, and the following holds
\begin{equation}\label{equ:direct_sum}
    \mathbb{P}_n(\mathbb{S}^d) =  \mathbb{H}_{0}(\mathbb{S}^d) \bigoplus \mathbb{H}_{1}(\mathbb{S}^d) \bigoplus \cdots \bigoplus \mathbb{H}_{n}(\mathbb{S}^d) 
\end{equation}
and
\begin{equation*}
    d_n:=\dim (\mathbb{P}_n(\mathbb{S}^d)) = \sum_{\ell=0}^{n} Z(d, \ell)= Z(d+1,n)=\frac{(2n+d)(n+d-1)!}{d!n!}.
\end{equation*}

The reproducing kernel $G_{\ell}: \mathbb{S}^{d}\times \mathbb{S}^{d}\to \mathbb{R}$ of the space $\mathbb{H}_{\ell}(\mathbb{S}^d)$ is
\begin{equation}\label{equ:kernel}
    G_{\ell}(\mathbf{x},\mathbf{y}):=\sum_{k=1}^{Z(d,\ell)} Y_{\ell k}^{(d)}(\mathbf{x})Y_{\ell k}^{(d)}(\mathbf{y}) = \frac{2\ell+d-1}{(d-1)\omega_d}C_{\ell}^{\frac{d-1}{2}}(\mathbf{x}\cdot\mathbf{y}),
\end{equation}
where $C_{\ell}^{\frac{d-1}{2}}$ is the Gegenbauer polynomial \cite{szego1975orthogonal} of degree $\ell$ and of index $\frac{d-1}{2}$, and $\mathbf{x}\cdot \mathbf{y}$ is the Euclidean inner product. Then $G_{\ell}$ satisfies the following properties:
\begin{equation*}
    G_{\ell}(\mathbf{x},\cdot) \in \mathbb{P}_{\ell}(\mathbb{S}^d), \quad \mathbf{x} \in \mathbb{S}^d; \qquad G_{\ell}(\mathbf{x},\mathbf{y})=G_{\ell}(\mathbf{y},\mathbf{x}), \quad \mathbf{x},\mathbf{y} \in \mathbb{S}^d;
\end{equation*}
\begin{equation*}
    p(\mathbf{x}) = \langle p, G_\ell(\mathbf{x},\cdot) \rangle_{L_2} , \quad \mathbf{x} \in \mathbb{S}^d, \quad p \in \mathbb{H}_{\ell}(\mathbb{S}^d).
\end{equation*}
Therefore, by \eqref{equ:direct_sum},
we have 
\begin{equation*}
    p = \langle p, E_n(\mathbf{x},\cdot) \rangle_{L_2} , \quad \forall\mathbf{x} \in \mathbb{S}^d, \quad \forall p \in \mathbb{P}_n(\mathbb{S}^d),
\end{equation*}
where 
\begin{equation}\label{equ:E_kernel}
    E_{n}(\mathbf{x},\mathbf{y}): = \sum_{\ell=0}^{n} G_{\ell}(\mathbf{x},\mathbf{y}) =\frac{1}{\omega_d} \frac{(d)_n}{\left (\frac{d}{2} \right )_n} P_n^{(\frac{d}{2},\frac{d-2}{2})}(\mathbf{x}\cdot\mathbf{y})
\end{equation}
with the Pochhammer symbol (rising factorial)
\begin{equation*}
    (a)_0:=1, \quad (a)_{\ell}:=a(a+1)\cdots (a+\ell -1), \quad a \in \mathbb{R}, \ell \in \mathbb{N},
\end{equation*}
and $P_n^{(\frac{d}{2},\frac{d-2}{2})}$ the Jacobi polynomial \cite{szego1975orthogonal} of degree $n$ with indices $\frac{d}{2}$ and $\frac{d-2}{2}$.

From $L_2(\mathbb{S}^d)$ to the polynomial space $\mathbb{P}_{n}(\mathbb{S}^d)$, there exists a unique projection $\mathcal{P}_{n}:L_2(\mathbb{S}^d) \to \mathbb{P}_{n}(\mathbb{S}^d)$, i.e.,
\begin{equation*}
    \mathcal{P}_{n} f:= \sum_{\ell=0}^{n} \sum_{k=1}^{Z(d,\ell)}\hat{f}_{\ell k}^{(d)} Y_{\ell k}^{(d)} = \sum_{\ell=0}^{n}\sum_{k=1}^{Z(d,\ell)} \left \langle f, Y_{\ell k}^{(d)} \right \rangle_{L_2} Y_{\ell k}^{(d)}, % \quad \forall f \in L^2(\mathbb{S}^d),
\end{equation*}
with the Fourier coefficients
\begin{equation}\label{equ:coefficients_fourier}
    \hat{f}_{\ell k}^{(d)} :=\left \langle f, Y_{\ell k}^{(d)} \right \rangle_{L^2} = \int_{\mathbb{S}^d} f(\mathbf{x}) Y_{\ell k}^{(d)}(\mathbf{x})  {{\rm{d}}\omega_d(\mathbf{x})}.
\end{equation}
With the help of $E_n$ of $\mathbb{P}_n(\mathbb{S}^d)$, we can rewrite the $L_2(\mathbb{S}^d)$-orthogonal
projection $\mathcal{P}_n$ onto $\mathbb{P}_n(\mathbb{S}^d)$ as
\begin{equation*}
    \mathcal{P}_n f (\mathbf{x}) =\int_{\mathbb{S}^d} f(\mathbf{y})E_n(\mathbf{x},\mathbf{y}) {\rm{d}}\omega_d(\mathbf{y}) =\left \langle f, E_n(\mathbf{x}, \cdot) \right \rangle_{L_2},  \quad \forall \mathbf{x} \in \mathbb{S}^d.
\end{equation*}

In the general case, the computation of the integral in \eqref{equ:coefficients_fourier} is a challenging task \cite{davis2007numerical}. Thus, to approximate the Fourier-like coefficients,  we seek the help of a quadrature rule
\begin{equation}\label{equ:quadrature}
    \sum_{j=1}^{N} w_j f(\mathbf{x}_j) \approx \int_{\mathbb{S}^d} f(\mathbf{x}) {\rm{d}}\omega_d(\mathbf{x}) \quad \forall f \in \mathcal{C}(\mathbb{S}^d),
\end{equation}
where $\mathbf{w}=\{w_1, \ldots, w_N\}$ is a set of positive weights and $\mathcal{X}_N=\{\mathbf{x}_1, \ldots, \mathbf{x}_N\} \subset \mathbb{S}^d$ is a set of nodes, and $\mathcal{C}(\mathbb{S}^{d})$ is the space of all continuous functions over $\mathbb{S}^d$. A quadrature rule \eqref{equ:quadrature} is said to have quadrature exactness $m$ if 
\begin{equation*}
     \sum_{j=1}^{N} w_j p(\mathbf{x}_j) = \int_{\mathbb{S}^d} p(\mathbf{x}) {\rm{d}}\omega_d(\mathbf{x}) \quad \forall p \in \mathbb{P}_m(\mathbb{S}^d).
\end{equation*}

Under a quadrature rule \eqref{equ:quadrature} with quadrature exactness $2n$, we introduce the ``\emph{discrete (semi) inner product}'' 
\begin{equation}\label{equ:semiinner}
    \langle  f,g \rangle_N := \sum_{j=1}^{N}w_jf(\mathbf{x}_j)g(\mathbf{x}_j) \quad\forall f,g \in \mathcal{C}(\mathbb{S}^d).
\end{equation} The \emph{hyperinterpolation operator} $\mathcal{L}_n: \mathcal{C}(\mathbb{S}^d) \to \mathbb{P}_n(\mathbb{S}^d)$ is defined \cite{sloan1995hyperinterpolation} as
\begin{eqnarray}\label{equ:hyper}
\mathcal{L}_n f:= \sum_{\ell=0}^{n}\sum_{k=1}^{Z(d,\ell)} \left \langle {f, Y_{\ell k}^{(d)}} \right \rangle_N   Y_{\ell k}^{(d)}. %, \qquad \forall  f \in \mathcal{C}(\mathbb{S}^d).
\end{eqnarray}

For more properties of hyperinterpolation on the sphere, we refer to \cite{an2024bypassing,MR2274179,dai2013approximation,MR2274179,reimer2003multivariate,sloan2012filtered,zbMATH01421286}. 
\subsection{Variants of hyperinterpolation}
To illustrate the diverse algebraic properties of hyperinterpolation, we review some variants of hyperinterpolation, such as \emph{filtered hyperinterpolation} \cite{sloan2012filtered},
\emph{Lasso hyperinterpolation} \cite{an2021lasso}, \emph{hard thresholding hyperinterpolation} \cite{an2023hard} and \emph{generalized hyperinterpolation} \cite{dai2006GeneralizedHyperinterpolation}. 
In contrast to those single-operator variants of hyperinterpolation, we have constructed a variant called \emph{hybrid hyperinterpolation} which is a composition
of filtered hyperinterpolation and Lasso hyperinterpolation \cite{an2023hybrid}. In this paper, we mainly investigate single operator variants of hyperinterpolation.  

Now, we first list some important definitions which are crucial in variants of hyperinterpolation. The following definitions of soft and hard thresholding operators are closely 
related to Lasso hyperinterpolation and hard thresholding hyperinterpolation, respectively. 

\begin{definition}[\cite{donoho1994ideal}]\label{def:softoperator}
The \emph{soft thresholding operator}, denoted by $\eta_S(a,k)$, is defined as
\begin{equation*}
 \eta_{S}(a,k):=\max(0,a-k)+\min(0,a+k)   .
\end{equation*}
\end{definition}

\begin{definition}[\cite{donoho1994ideal}]\label{def:hardoperator}
The \emph{hard thresholding operator}, denoted by $\eta_{H}(a,k)$, is defined as
\begin{equation*}
\eta_{H}(a,k):=\left\{\begin{array}{ll}
a,  & \text{if}~~|a|> k ,\\
0,  & \text{if}~~|a|\leq k.
\end{array}\right.
\end{equation*}
\end{definition}    

Then we can introduce \emph{Lasso hyperinterpolation} as the following.
\begin{definition}[\cite{an2021lasso}]\label{def:lassohyper}
Given a quadrature rule \eqref{equ:quadrature} with exactness $2n$, a \emph{Lasso hyperinterpolation} of $f \in \mathcal{C}(\mathbb{S}^d)$ onto $\mathbb{P}_{n}(\mathbb{S}^d)$ is defined as
\begin{equation}\label{equ:lassohyper}
    \mathcal{L}_n^{\lambda} f:= \sum_{\ell=0}^{n}\sum_{k=1}^{Z(d,\ell)} \eta_{S} \left( \left \langle f, Y_{\ell k}^{(d)} \right \rangle_N, \lambda\mu_{\ell k}  \right)Y_{\ell k}^{(d)},
\end{equation}
where $\lambda>0$ is the regularization parameter and $\mu_{\ell k}$'s are a set of positive penalty parameters.    
\end{definition}

Similarly, we can define \emph{hard thresholding hyperinterpolation} with the aid of
a hard thresholding operator $\eta_H(\cdot,\lambda)$.
\begin{definition}[\cite{an2023hard}]\label{def:hardhyper}
Given a quadrature rule \eqref{equ:quadrature} with exactness $2n$, {a \emph{hard thresholding hyperinterpolation}} of $f\in \mathcal{C}(\mathbb{S}^d)$ onto $\mathbb{P}_n(\mathbb{S}^d)$ is defined as
\begin{equation}\label{equ:hardhyper}
    \mathcal{H}_n^{\lambda} f:= \sum_{\ell=0}^{n}\sum_{k=1}^{Z(d,\ell)} \eta_{H} \left( \left \langle f, Y_{\ell k}^{(d)} \right \rangle_N, \lambda  \right)Y_{\ell k}^{(d)},
\end{equation}  
where $\lambda>0$ is the regularization parameter.
\end{definition}

\begin{figure}[htbp]
    \centering
    % Requires \usepackage{graphicx}
    %\includegraphics[width=\textwidth]{}\\
    \includegraphics[scale=0.20]{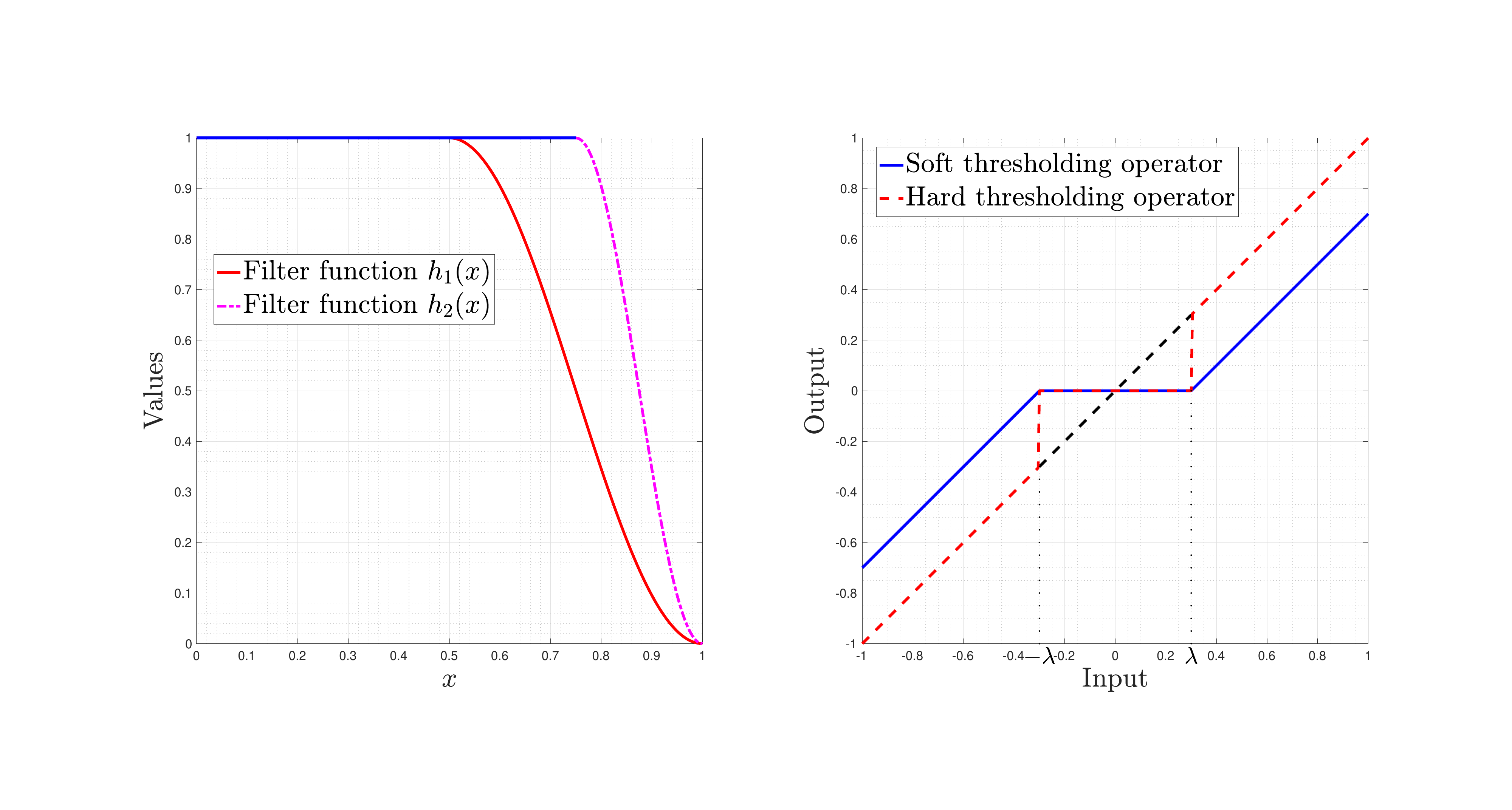}\\
    \caption{Left: filter functions; Right: soft and hard thresholding operators.}\label{fig:filter_soft_hard}
  \end{figure}

Next, we present the filter function utilized in filtered hyperinterpolation \cite{sloan2012filtered}. The \emph{filter function}, denoted by $h(x)$, is defined as
\begin{equation}\label{def:filterfunction}
h(x):=\left\{\begin{array}{ll}
1,  & \text{if}~~x \in [0,s] ,\\
0,  & \text{if}~~x\in[1,\infty),
\end{array}\right.
\end{equation}    
where $ 0<s<1 $ is a constant. If the sum of  $(d+1)$st forward difference of $h(x)$ is bounded, the corresponding approximation scheme is uniformly convergent \cite{sloan2011uniform}.

The filter function $h(x)$ has many different forms \cite{sloan2011polynomial}. In the following, we give two 
filter functions defined by sin function as examples \cite{an2011distribution}, i.e., when $s=\frac{1}{2}$,
\begin{equation}
    h_1(x):= \left\{\begin{array}{ll}
        1, & x\in[0,1/2], \\
        \sin^2 (\pi x), & x \in[1/2,1], \\
        0, & x \in [1,+\infty),
    \end{array}\right.
\end{equation}
and when $s=\frac{3}{4}$,
\begin{equation}
    h_2(x):= \left\{\begin{array}{ll}
        1, & x\in[0,3/4], \\
        \sin^2 (2\pi x), & x \in[3/4,1], \\
        0, & x \in [1,+\infty),
    \end{array}\right.
\end{equation}
and we display $h_1(x)$ and $h_2(x)$ on the left of Figure \ref{fig:filter_soft_hard}.

In what follows, we denote by $\lfloor \cdot \rfloor$ and $\lceil \cdot \rceil$ respectively the floor and the ceiling function. Given a chosen filter, we can define the filtered hyperinterpolation as the following:

\begin{definition}[\cite{sloan2012filtered}]\label{def:filteredhyper}
Let ${\overline{{\rm{supp}}(h)}}=[0,a]$, with $0<s< a \leq 1$. Given a quadrature rule \eqref{equ:quadrature} with exactness $\lceil an\rceil-1+\lfloor n/2 \rfloor$, the \emph{filtered hyperinterpolation} $\mathcal{F}_{n,N}{f} \in \mathbb{P}_{\lceil an\rceil-1} (\mathbb{S}^{d})$ of $f \in \mathcal{C}(\mathbb{S}^{d})$ is defined as
\begin{equation}\label{equ:filteredhyper}
    \mathcal{F}_{n,N}{f}:=\sum_{\ell=0}^{n}h\left(\frac{\ell}{n}\right)\sum_{k=1}^{Z(d,\ell)} \left \langle f, Y_{\ell k}^{(d)} \right \rangle_N Y_{\ell k}^{(d)}.
\end{equation}
where $h$ is a filter function defined by  \eqref{def:filterfunction}.
\end{definition}
Consequently, we have
\begin{proposition}   
For $0<s<1$,
\begin{equation*} 
    \mathcal{F}_{n,N}p = p  \quad \forall p \in \mathbb{P}_{\lfloor sn \rfloor}(\mathbb{S}^d).
\end{equation*}
\end{proposition}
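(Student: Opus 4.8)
The plan is to evaluate $\mathcal{F}_{n,N}p$ directly from its defining series \eqref{equ:filteredhyper} and to show that, for a polynomial $p$ of degree at most $\lfloor sn\rfloor$, every discrete coefficient $\langle p, Y_{\ell k}^{(d)}\rangle_N$ that is weighted by a nonzero filter value $h(\ell/n)$ coincides with the exact Fourier coefficient $\langle p, Y_{\ell k}^{(d)}\rangle_{L_2}$. Once this is granted, the series collapses: the filter $h$ is identically $1$ on $[0,s]$, so it leaves untouched exactly the frequencies $\ell\le\lfloor sn\rfloor$ that carry the spherical-harmonic content of $p$, while at the higher frequencies it acts on coefficients that are already zero; hence $\mathcal{F}_{n,N}p$ reduces to the $L_2$-orthogonal projection of $p$ onto $\mathbb{P}_{\lfloor sn\rfloor}(\mathbb{S}^d)$, which is $p$ itself.

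In detail I would proceed as follows. First, by \eqref{equ:direct_sum} write $p=\sum_{\ell=0}^{\lfloor sn\rfloor}\sum_{k=1}^{Z(d,\ell)}\hat{p}_{\ell k}^{(d)}\,Y_{\ell k}^{(d)}$ with $\hat{p}_{\ell k}^{(d)}:=\langle p, Y_{\ell k}^{(d)}\rangle_{L_2}$, noting $\hat{p}_{\ell k}^{(d)}=0$ whenever $\ell>\lfloor sn\rfloor$. Second, since $\overline{{\rm{supp}}(h)}=[0,a]$, we have $h(\ell/n)=0$ for $\ell\ge\lceil an\rceil$, so only the indices $0\le\ell\le\lceil an\rceil-1$ actually contribute to \eqref{equ:filteredhyper}; for any such $\ell$ and any $k$, the product $p\,Y_{\ell k}^{(d)}$ is a spherical polynomial of degree at most $\lfloor sn\rfloor+\lceil an\rceil-1$, which does not exceed the quadrature exactness $\lceil an\rceil-1+\lfloor n/2\rfloor$ prescribed in Definition~\ref{def:filteredhyper}. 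Consequently the quadrature rule integrates $p\,Y_{\ell k}^{(d)}$ exactly, so $\langle p, Y_{\ell k}^{(d)}\rangle_N=\langle p, Y_{\ell k}^{(d)}\rangle_{L_2}=\hat{p}_{\ell k}^{(d)}$ for every relevant $(\ell,k)$. Third, substitute this into \eqref{equ:filteredhyper}: the terms with $\ell>\lfloor sn\rfloor$ vanish because $\hat{p}_{\ell k}^{(d)}=0$ there, while for $\ell\le\lfloor sn\rfloor$ one has $\ell/n\le\lfloor sn\rfloor/n\le s$ and hence $h(\ell/n)=1$; therefore $\mathcal{F}_{n,N}p=\sum_{\ell=0}^{\lfloor sn\rfloor}\sum_{k=1}^{Z(d,\ell)}\hat{p}_{\ell k}^{(d)}\,Y_{\ell k}^{(d)}=p$.

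The only step where any care is needed is the second one: checking that the prescribed exactness $\lceil an\rceil-1+\lfloor n/2\rfloor$ is large enough to integrate every product $p\,Y_{\ell k}^{(d)}$ with $\deg p\le\lfloor sn\rfloor$ and $\ell\le\lceil an\rceil-1$ exactly, i.e.\ that $\lfloor sn\rfloor+\lceil an\rceil-1$ does not exceed it; the rest is bookkeeping. An equivalent route, which some may find cleaner, runs through the filtered kernel $v_n(\mathbf{x},\mathbf{y}):=\sum_{\ell=0}^{n}h(\ell/n)\,G_\ell(\mathbf{x},\mathbf{y})$, which by \eqref{equ:kernel} lies in $\mathbb{P}_{\lceil an\rceil-1}(\mathbb{S}^d)$ in each argument: one rewrites $\mathcal{F}_{n,N}p(\mathbf{x})=\langle p(\cdot),v_n(\mathbf{x},\cdot)\rangle_N$, uses the same degree count and exactness to replace this by $\langle p(\cdot),v_n(\mathbf{x},\cdot)\rangle_{L_2}$, and finishes with the reproducing property of the $G_\ell$ together with $h(\ell/n)=1$ for $\ell\le\lfloor sn\rfloor$. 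Either way, the crux is the identical exactness check.
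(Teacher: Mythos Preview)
The paper states this proposition without proof, so there is nothing to compare your argument against; your approach---expand $p$ in spherical harmonics, replace the discrete coefficients by the continuous ones via quadrature exactness, and use $h(\ell/n)=1$ for $\ell\le\lfloor sn\rfloor$---is the natural one and is correct in outline.

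The gap lies exactly where you flag it. You assert that $\lfloor sn\rfloor+\lceil an\rceil-1$ does not exceed the prescribed exactness $\lceil an\rceil-1+\lfloor n/2\rfloor$, i.e.\ that $\lfloor sn\rfloor\le\lfloor n/2\rfloor$. This holds only for $s\le 1/2$. For $s>1/2$---and the paper itself exhibits the filter $h_2$ with $s=3/4$---the inequality fails (take $n=4$, $s=3/4$: then $\lfloor sn\rfloor=3>2=\lfloor n/2\rfloor$), and the exactness in Definition~\ref{def:filteredhyper} is then too weak to guarantee $\langle p,Y_{\ell k}^{(d)}\rangle_N=\langle p,Y_{\ell k}^{(d)}\rangle_{L_2}$ for all $\ell\le\lceil an\rceil-1$. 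This appears to be an inconsistency in the paper: Definition~\ref{def:filteredhyper} carries over the exactness degree from the classical Sloan--Womersley setting $s=1/2$ while allowing general $s\in(0,1)$. Your argument becomes complete and correct under either the additional hypothesis $s\le 1/2$ or the amended exactness $\lceil an\rceil-1+\lfloor sn\rfloor$; you should state one of these explicitly rather than let the inequality pass unverified.
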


%For convenience, we use $h_{\ell}$ to represent %$h(\ell/n)$ for all $\ell=0,\cdots,n$ (see %\eqref{equ:filteredhyper} ). 

In the following, we introduce the generalized hyperinterpolation:

\begin{definition}[\cite{dai2006GeneralizedHyperinterpolation}]\label{def:GeneralizedHyper}
    Let $G_{\ell}(\cdot)$ be defined by \eqref{equ:kernel} for $\ell=0,1,\ldots, n$. Assume that 
    \begin{equation*}
        D_n (t) = \sum_{\ell=0}^{n} a_{n\ell} G_{\ell}(t), \quad n=1,2,\ldots, \quad t \in [-1,1],
    \end{equation*}
    %is a sequence of polynomials satisfying
    %\begin{equation*}
        %\lim_{n \to \infty} a_{n0}=\lim_{n %\to \infty} a_{n1} = 1.
    %\end{equation*}
    is a sequence of polynomials on $[-1,1]$ satisfying $\int_{\mathbb{S}^d} D_n(\mathbf{x}^{\rm{T}}\mathbf{y}){\rm{d}}\omega_d(\mathbf{y})=1$ for $\mathbf{x} \in \mathbb{S}^d$ and $n=1,2,\ldots$, and 
    \begin{equation*}
        \sup_n \int_0^{\pi} (1+n\theta)^2 |D_n(\cos(\theta))|\sin^{d-1}\theta {\rm{d}}\theta \leq K < \infty.
    \end{equation*}
    Then the \emph{generalized hyperinterpolation operators} $\mathfrak{G}\mathfrak{L}_{n}$ associated to $\{D_n(t)\}_{n=1}^{\infty}$ are defined as
    \begin{equation}\label{equ:GeneralizedHyper}
        \mathfrak{G}\mathfrak{L}_{n} f(\mathbf{x}) \equiv \mathfrak{G}\mathfrak{L}_{n,D_n} f(\mathbf{x})= \sum_{j=1}^{N}w_j f(\mathbf{x}_j)D_{n}(\mathbf{x}\cdot \mathbf{x}_j)
        = \sum_{\ell=0}^{n}a_{n\ell} \sum_{k=1}^{Z(d,\ell)} \left \langle f, Y_{\ell k}^{(d)} \right \rangle_N Y_{\ell k}^{(d)} (\mathbf{x}).
    \end{equation}
    %where $\langle \cdot, \cdot \rangle_N^{'}$ is said to be of degree $n+1$.
\end{definition}
    
\begin{remark}
Different from the hyperinterpolation operator $\mathcal{L}_n$ which demands the quadrature exactness of $2n$, the generalized hyperinterpolation operator $\mathfrak{G}\mathfrak{L}_n$ requires that
the positive quadrature formula is exact of degree $n+1$ in \cite{dai2006GeneralizedHyperinterpolation,reimer2002GeneralizedHyperinterpolation}. In fact, the quadrature exactness $n+1$ is the lowest requirement for generalized hyperinterpolation to guarantee the uniform convergence. 
The quadrature exactness $2n$ does not influence the uniform convergence for generalized hyperinterpolation \cite[Definition 6.9, Theorem 6.34]{reimer2003multivariate}.
\end{remark}

\begin{remark}
    The generalized hyperinterpolation defined in \cite{reimer2002GeneralizedHyperinterpolation,reimer2003multivariate} assumes the positivity of the kernels $D_n$. Under this assumption, Reimer proved that the generalized hyperinterpolation operator possesses the important 
    property that the approximation error can be uniformly controlled by the modulus of continuity of the first order \cite{reimer2002GeneralizedHyperinterpolation}. Dai proved the uniform approximation error for $\mathfrak{G}\mathfrak{L}_{n}$ 
    can be controlled by the modulus of continuity of the second order in \cite{dai2006GeneralizedHyperinterpolation}, where he bypassed the assumption of the positivity
 of the kernels $D_n$.
\end{remark}
To sum up, we can regard the variants of hyperinterpolation in above as 
an operator $\Phi$ act on the ``approximated" Fourier coefficients:
\begin{equation}\label{Tn}\mathcal{T}_n f : = \sum_{\ell=0}^{n}\sum_{k=1}^{Z(d,\ell)} \Phi\left(\left \langle {f, Y_{\ell k}^{(d)}} \right \rangle_N \right)   Y_{\ell k}^{(d)} .
\end{equation}
As a matter of fact, the $L_2$ operator norm of $\mathcal{T}_n$ should be bounded by a constant. This is consistent with the original hyperinterpolation
\cite{sloan1995hyperinterpolation}.
Now, we can define the important concept \emph{hyperinterpolation class}.
\begin{definition}\label{def:hyper_class}
Let $\mathcal{T}_n$ be defined by \eqref{Tn}.
Given a quadrature rule \eqref{equ:quadrature} with exactness $2n$, for $f \in \mathcal{C}(\mathbb{S}^d)$, the \emph{hyperinterpolation  class} on the sphere $\mathbb{S}^{d}$ is a set of operators defined by
\begin{equation}\label{equ:hyper_class}
    \mathbf{HC}(\mathbb{S}^d):=\left \{ \mathcal{T}_{n}|\,
    \text{ there exists}\,\,  c_d > 0 \,\, \text{such that}\,
    \Vert \mathcal{T}_{n}f \Vert_{L_2}\leq  c_d\Vert f\Vert_{\infty} \right \},
\end{equation}
where $c_d$ is independent of $n$.
%\begin{equation*}
    %\Vert \mathcal{T}_{n} \Vert_{L_2}=\sup{
    %\left \{
   % \Vert \mathcal{T} f_{n} \Vert_{L_2}:\,f\in C(\mathbb{S}^d),\,
   % \Vert f\Vert_{\infty}\leq1 
   % \right \}},
%\end{equation*}
\end{definition}
Before leaving this subsection we remark that there are many choices of the operator $\Phi:\mathbb{R} \to \mathbb{R}$, such as soft and hard thresholding operators \cite{donoho1994ideal}. Lasso, hard thresholding, filtered and generalized hyperinterpolations are elements of  $\mathbf{HC}(\mathbb{S}^d)$.  In fact,  the $L_2$ operator norm of Lasso hyperinterpolation  satisfies $\|\mathcal{L}_{n}^{\lambda} f\|_{L_2} \leq \tau \|f\|_{\infty}$ with $\tau:=\tau(f) \in (0,1)$ \cite[Theorem 4.4]{an2021lasso}, hard thresholding hyperinterpolation admits $\|\mathcal{H}_{n}^{\lambda} f\|_{L_2} \leq c_d \|f\|_{\infty}$ \cite[Theorem 4]{an2023hard}, filtered hyperinterpolation  achieves uniform boundedness $\|\mathcal{F}_{n,N}f\|_{\infty} \leq c_d \|f\|_{\infty}$ implying $L_2$-boundedness $\|\mathcal{F}_{n,N}f\|_{L_2} \leq c_d \|f\|_{\infty}$ \cite[Theorem 2]{sloan2012filtered}, generalized hyperinterpolation exhibits $\| \mathfrak{G}\mathfrak{L}_{n} \|_{\infty} \leq c_d \|f\|_{\infty}$, which ensures $\| \mathfrak{G}\mathfrak{L}_{n} \|_{L_2} \leq c_d \|f\|_{\infty}$ \cite[Lemma 3.3]{dai2006GeneralizedHyperinterpolation}.

From now on, any operator that we are interested in belongs to the hyperinterpolation class $\mathbf{HC}(\mathbb{S}^d)$.

\begin{remark}
     For a given operator $\mathcal{T}:\mathcal{C}(\mathbb{S}^d) \to L_2 (\mathbb{S}^d)$, the $L_2$ operator norm mentioned above is defined as 
    \begin{equation}
        \|\mathcal{T}\|_{\rm{op}}:=\sup_{f\neq 0}\frac{\|\mathcal{T} f\|_{L_2}}{\|f\|_{\infty}}.
    \end{equation}
    
\end{remark}

\section{Projection}\label{sec:projection}
It is well known that the concept of \emph{projection} plays a vital role in bounded operators on an inner product space \cite{rudin2011functional}. Similarly, the inner product plays a vital role in a projection property Hilbert space \cite{sloan1995hyperinterpolation}. Thus, we focus first on the projection properties under the setting of hyperinterpolation.

Based on the discrete (semi) inner product \eqref{equ:semiinner}, we introduce an important quantity as follows.
%\begin{definition}\label{def:seminorm}
For a given $f \in \mathcal{C}(\mathbb{S}^d)$,  a real-valued function $\|f\|_{\ell^{2}(\mathbf{w})}$ defined as 
\begin{equation}\label{equ:seminorm}
    \|f\|_{\ell^{2}(\mathbf{w})} :=\langle  f,f \rangle_N^{1/2}= \left( \sum_{j=1}^{N} w_j |f(\mathbf{x}_j)|^2 \right)^{1/2},
\end{equation}
is a \emph{semi-norm} \cite[p. 24]{rudin2011functional}, where  $\mathbf{w}=\{w_1, \ldots, w_N\}$ is a set of positive weights and $\mathcal{X}_N=\{\mathbf{x}_1, \ldots, \mathbf{x}_N\} \subset \mathbb{S}^d$ is a set of nodes.
\begin{remark}\label{re:1}
Note that for a continuous function $f$ on $\mathbb{S}^d$, $\|f\|_{\ell^{2}(\mathbf{w})}=0$ does not mean that $f \equiv 0$ because there is a function $f$ that does not vanish in $\mathbb{S}^d$ but its values at all nodes $\mathbf{x}_1, \ldots, \mathbf{x}_N$ are zeros. It is not hard to verify the positive homogeneity and the subadditivity of $\|\cdot\|_{\ell^{2}(\mathbf{w})}$.
\end{remark}

Sloan revealed that $\mathcal{L}_nf$ is the best discrete least squares approximation (weighted by quadrature weights) of $f$ at the quadrature points in \cite{sloan1995hyperinterpolation}, which implies the important result as the following:

\begin{lemma}\label{lem:BestApproximation}
For a given $f \in \mathcal{C}(\mathbb{S}^d)$ with hyperinterpolation coefficients $a_{\ell k} = \left \langle f, Y_{\ell k}^{(d)} \right \rangle_{N}$ for $\ell=0,\ldots, n$ and $k=1,\ldots, Z(d,\ell)$.
Then 
\begin{equation}\label{equ:BestApproximation}
    \| f- \mathcal{L}_n f\|_{\ell^{2}(\mathbf{w})} \leq \left\| f - \sum_{\ell=0}^{n}\sum_{k=1}^{Z(d,\ell )} c_{\ell k} Y_{\ell k}^{(d)} \right\|_{\ell^2(\mathbf{w})}
\end{equation}
for any real numbers $c_{\ell k}$. Moreover, equality holds precisely when $c_{\ell k}=a_{\ell k}$ for all $\ell=0,\ldots, n$ and $k=1,\ldots, Z(d,\ell)$.
\end{lemma}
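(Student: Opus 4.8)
The plan is to run the classical least-squares (orthogonal projection) argument, but carried out entirely inside the discrete semi-inner product $\langle\cdot,\cdot\rangle_N$ rather than $\langle\cdot,\cdot\rangle_{L_2}$. The one structural fact that makes everything work is quadrature exactness of degree $2n$: for any $p,q\in\mathbb{P}_n(\mathbb{S}^d)$ the product $pq$ lies in $\mathbb{P}_{2n}(\mathbb{S}^d)$, so $\langle p,q\rangle_N=\langle p,q\rangle_{L_2}$. In particular the spherical harmonics remain orthonormal with respect to $\langle\cdot,\cdot\rangle_N$, i.e. $\langle Y_{\ell k}^{(d)},Y_{\ell' k'}^{(d)}\rangle_N=\delta_{\ell\ell'}\delta_{kk'}$ for all admissible indices.

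First I would record the orthogonality (the ``normal equations'') satisfied by $\mathcal{L}_n f$. By definition $\langle f,Y_{\ell k}^{(d)}\rangle_N=a_{\ell k}$, and by the discrete orthonormality just noted also $\langle \mathcal{L}_n f,Y_{\ell k}^{(d)}\rangle_N = a_{\ell k}$; hence $\langle f-\mathcal{L}_n f,\,Y_{\ell k}^{(d)}\rangle_N=0$ for every $\ell=0,\ldots,n$ and $k=1,\ldots,Z(d,\ell)$, and by bilinearity $\langle f-\mathcal{L}_n f,\,p\rangle_N=0$ for all $p\in\mathbb{P}_n(\mathbb{S}^d)$.

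Next, given arbitrary reals $c_{\ell k}$, set $p:=\sum_{\ell=0}^{n}\sum_{k=1}^{Z(d,\ell)}c_{\ell k}Y_{\ell k}^{(d)}\in\mathbb{P}_n(\mathbb{S}^d)$ and split $f-p=(f-\mathcal{L}_n f)+(\mathcal{L}_n f-p)$. Since $\mathcal{L}_n f-p\in\mathbb{P}_n(\mathbb{S}^d)$, the previous step gives $\langle f-\mathcal{L}_n f,\,\mathcal{L}_n f-p\rangle_N=0$, so expanding $\|f-p\|_{\ell^2(\mathbf{w})}^2=\langle f-p,f-p\rangle_N$ via bilinearity and symmetry (the Pythagorean identity for the semi-norm) yields
\[
\|f-p\|_{\ell^2(\mathbf{w})}^2=\|f-\mathcal{L}_n f\|_{\ell^2(\mathbf{w})}^2+\|\mathcal{L}_n f-p\|_{\ell^2(\mathbf{w})}^2\ \geq\ \|f-\mathcal{L}_n f\|_{\ell^2(\mathbf{w})}^2,
\]
which is \eqref{equ:BestApproximation}. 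For the equality clause, note $\mathcal{L}_n f-p=\sum_{\ell,k}(a_{\ell k}-c_{\ell k})Y_{\ell k}^{(d)}$, and using the discrete orthonormality once more, $\|\mathcal{L}_n f-p\|_{\ell^2(\mathbf{w})}^2=\sum_{\ell,k}(a_{\ell k}-c_{\ell k})^2$. Hence equality in \eqref{equ:BestApproximation} forces $a_{\ell k}-c_{\ell k}=0$ for every $\ell,k$, i.e. $c_{\ell k}=a_{\ell k}$; conversely that choice trivially gives equality.

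There is no serious obstacle here; the only point requiring care is that $\|\cdot\|_{\ell^2(\mathbf{w})}$ is merely a semi-norm on $\mathcal{C}(\mathbb{S}^d)$ (Remark \ref{re:1}), so one cannot upgrade a small value of $\|f-\mathcal{L}_n f\|_{\ell^2(\mathbf{w})}$ to any pointwise statement. Fortunately the argument never needs this: the Pythagorean split and the equality characterization both take place through the bilinear form $\langle\cdot,\cdot\rangle_N$ and the exact evaluation $\|\sum_{\ell,k}(a_{\ell k}-c_{\ell k})Y_{\ell k}^{(d)}\|_{\ell^2(\mathbf{w})}^2=\sum_{\ell,k}(a_{\ell k}-c_{\ell k})^2$, which is valid precisely because of $2n$-exactness and never touches the degeneracy of the semi-norm.
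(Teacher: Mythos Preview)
Your proof is correct and follows essentially the same approach as the paper's: both split $f-p=(f-\mathcal{L}_n f)+(\mathcal{L}_n f-p)$ with $b_{\ell k}=a_{\ell k}-c_{\ell k}$ and invoke the Pythagorean identity in the discrete semi-inner product. Your version is simply more explicit about the supporting ingredients (discrete orthonormality from $2n$-exactness, the orthogonality $\langle f-\mathcal{L}_n f,\,\mathbb{P}_n\rangle_N=0$, and the computation $\|\mathcal{L}_n f-p\|_{\ell^2(\mathbf{w})}^2=\sum_{\ell,k}(a_{\ell k}-c_{\ell k})^2$ for the equality clause), whereas the paper compresses all of this into ``by Pythagorean theorem.''
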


\begin{proof}
By direct computation, we have
\begin{equation*}
    f - \sum_{\ell=0}^{n}\sum_{k=1}^{Z(d,\ell )} c_{\ell k} Y_{\ell k}^{(d)}  = f- \mathcal{L}_n f + \sum_{\ell=0}^{n}\sum_{k=1}^{Z(d,\ell )} b_{\ell k} Y_{\ell k}^{(d)},
\end{equation*}
where $b_{\ell k}= a_{\ell k} - c_{\ell k}$. Then by Pythagorean theorem, we complete the proof.
\end{proof}

In a normed space, the unique decomposition of an element holds \cite{rudin2011functional}. However, the situation 
is different when we consider the semi-norm $\| \cdot \|_{\ell^2(\mathbf{w})}$.

\begin{theorem}\label{thm:orthogonal-completion}
Let the continuous functions space $\mathcal{C}(\mathbb{S}^d)$ be equipped with the semi-norm $\| \cdot \|_{\ell^2(\mathbf{w})}$. For any closed subspace $M \subset \mathcal{C}(\mathbb{S}^d)$, there holds
\begin{equation*}
    M \cap M^{\perp}=\{f: \|f\|_{\ell^2(\mathbf{w})} =0 ,f \in M, f \in M^{\perp} \},
\end{equation*}
where $M^{\perp}=\{ g: \langle f, g \rangle_N = 0, f\in M, g \in M^{\perp} \}$. 
\end{theorem}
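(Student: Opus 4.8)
The plan is to establish the two inclusions directly from the definitions; the only substantive ingredient is that the discrete bilinear form $\langle\cdot,\cdot\rangle_N$ in \eqref{equ:semiinner} is positive semi-definite, so that for $f\in\mathcal{C}(\mathbb{S}^d)$ one has $\langle f,f\rangle_N=0$ if and only if $\|f\|_{\ell^2(\mathbf{w})}=0$, by \eqref{equ:seminorm}. Write $S:=\{f:\|f\|_{\ell^2(\mathbf{w})}=0,\ f\in M,\ f\in M^\perp\}$ for the set on the right-hand side.

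The inclusion $S\subseteq M\cap M^\perp$ is immediate: every element of $S$ is, by its very description, a member of both $M$ and $M^\perp$, hence of their intersection.

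For the reverse inclusion $M\cap M^\perp\subseteq S$, I would pick an arbitrary $f\in M\cap M^\perp$ and use that $f$ belongs to $M$ and to $M^\perp$ simultaneously. Since $f\in M^\perp$, the defining property of $M^\perp$ gives $\langle p,f\rangle_N=0$ for all $p\in M$; inserting the admissible choice $p=f$ yields $\langle f,f\rangle_N=0$, whence $\|f\|_{\ell^2(\mathbf{w})}=\langle f,f\rangle_N^{1/2}=0$. Combined with $f\in M$ and $f\in M^\perp$, this shows $f\in S$, which finishes the argument.

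There is no genuine obstacle here: neither closedness of $M$ nor continuity of its elements enters the proof, and these hypotheses are retained only to parallel the classical Hilbert-space picture, where the analogous identity reads $M\cap M^\perp=\{0\}$. What the theorem makes precise --- in the spirit of Remark~\ref{re:1} --- is that, under the semi-norm $\|\cdot\|_{\ell^2(\mathbf{w})}$, this intersection may be nontrivial: it consists exactly of those functions lying in both $M$ and $M^\perp$ that vanish at every node $\mathbf{x}_1,\dots,\mathbf{x}_N$, a direct manifestation of the degeneracy of the discrete inner product \eqref{equ:semiinner}.
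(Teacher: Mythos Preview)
Your proof is correct and follows essentially the same approach as the paper: take $f\in M\cap M^\perp$ and use the definition of $M^\perp$ with $p=f$ to conclude $\|f\|_{\ell^2(\mathbf{w})}^2=\langle f,f\rangle_N=0$. Your write-up is in fact more careful than the paper's, which records only the nontrivial inclusion and leaves the other direction implicit.
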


\begin{proof}
Let $f \in M$ and $f \in M^{\perp}$. Since $\|\cdot\|_{\ell^2(\mathbf{w})}$ is a semi-norm, then 
$\|f\|_{\ell^2(\mathbf{w})}^2=\langle f , f\rangle_N=0$ does not mean $f \equiv 0$ by Remark \ref{re:1}.    
\end{proof}

\subsection{Hyper projection operators}

%Before giving some important results, we need to introduce %several concepts related to \emph{hyper projection} defined by %Definition \ref{def:hyper-projection}.
With the help of the discrete (semi) inner product \eqref{equ:semiinner}, we introduce the following two concepts.
\begin{definition}\label{def:HyperAdjoint}
    An operator $\mathcal{T}^{\ast}$ is a \emph{hyper adjoint} operator of another operator $\mathcal{T}$ if
    \begin{equation}\label{equ:HyperAdjoint}
     \langle \mathcal{T}f , g \rangle_N =  \langle  f, \mathcal{T}^{\ast}g \rangle_N  \quad \forall f,g \in \mathcal{C}(\mathbb{S}^d).
    \end{equation}
   
\end{definition}
   
\begin{definition}\label{def:Self-adjoint}
An operator $\mathcal{T}$ is a \emph{hyper self-adjoint} operator in the sense of the discrete (semi) inner product $\langle \cdot, \cdot \rangle_N$, i.e.,
\begin{equation}\label{equ:Self-adjoint}
 \langle \mathcal{T}f , g \rangle_N =  \langle  g, \mathcal{T}f \rangle_N \quad \forall f,g \in \mathcal{C}(\mathbb{S}^d). 
\end{equation}
\end{definition}
These two concepts can be compared with the adjointness in Hilbert spaces \cite{rudin2011functional}. 

\begin{proposition}\label{prop:filter_hyper_adjoint}
Filtered hyperinterpolation operator $\mathcal{F}_{n,N}$ and hyperinterpolation operator $\mathcal{L}_n$ are hyper self-adjoint.  
\end{proposition}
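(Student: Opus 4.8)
The plan is to verify the defining identity \eqref{equ:Self-adjoint} directly by expanding both sides as finite sums over the quadrature nodes and exploiting the fact that the spherical harmonics $Y_{\ell k}^{(d)}$ are real-valued functions. Recall that for an operator of the form $\mathcal{T}_n f = \sum_{\ell=0}^n \sum_{k=1}^{Z(d,\ell)} \Phi\!\left(\langle f, Y_{\ell k}^{(d)}\rangle_N\right) Y_{\ell k}^{(d)}$, in the two cases at hand $\Phi$ is linear: for $\mathcal{L}_n$ one has $\Phi(a)=a$, and for $\mathcal{F}_{n,N}$ one has $\Phi$ acting coefficient-wise by multiplication with the filter values $h(\ell/n)$. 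I would first observe the general principle: if $\mathcal{T}$ maps $\mathcal{C}(\mathbb{S}^d)$ into the \emph{real} polynomial space $\mathbb{P}_n(\mathbb{S}^d)$ (or any subspace of real-valued functions), then for every $f,g \in \mathcal{C}(\mathbb{S}^d)$ the quantity $\langle \mathcal{T}f, g\rangle_N = \sum_{j=1}^N w_j (\mathcal{T}f)(\mathbf{x}_j) g(\mathbf{x}_j)$ is a sum of products of real numbers, hence commutes: $\langle \mathcal{T}f, g\rangle_N = \sum_{j=1}^N w_j g(\mathbf{x}_j)(\mathcal{T}f)(\mathbf{x}_j) = \langle g, \mathcal{T}f\rangle_N$. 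This is exactly \eqref{equ:Self-adjoint}.

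Thus the core of the argument is almost immediate from the bilinearity and symmetry of the real discrete (semi) inner product together with the observation that both operators produce real-valued outputs (indeed real spherical polynomials), which they do because the system \eqref{equ:spherica_harmonics} is an $L_2(\mathbb{S}^d)$-orthonormal system of \emph{real} spherical harmonics and the coefficients $\langle f, Y_{\ell k}^{(d)}\rangle_N$ and $h(\ell/n)\langle f, Y_{\ell k}^{(d)}\rangle_N$ are real for real-valued $f$. I would write this out once for $\mathcal{L}_n$ by substituting \eqref{equ:hyper} into the left side of \eqref{equ:Self-adjoint}, interchanging the finite sums, and recognizing the commutation, then note that the identical reasoning applies to $\mathcal{F}_{n,N}$ using \eqref{equ:filteredhyper} with the fixed real scalars $h(\ell/n)$ absorbed into the coefficients.

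The only genuine subtlety — and the point I would be most careful about — is that Definition~\ref{def:Self-adjoint} as stated is really the assertion that $\langle \mathcal{T}f, g\rangle_N$ is a symmetric expression in the two arguments when viewed as values, which holds trivially for \emph{any} operator into real-valued functions; so I should make explicit that the content of the proposition is the combination "$\mathcal{T}$ has real-valued range" plus "the discrete inner product is a symmetric bilinear form on real functions," and that nothing deeper (such as appealing to the hyper adjoint from Definition~\ref{def:HyperAdjoint} or to quadrature exactness) is needed. If one instead wishes to interpret hyper self-adjointness through the hyper adjoint relation $\langle \mathcal{T}f,g\rangle_N=\langle f,\mathcal{T}^\ast g\rangle_N$ with $\mathcal{T}^\ast=\mathcal{T}$, then I would additionally invoke the quadrature exactness of degree $2n$ to show $\langle \mathcal{L}_n f, g\rangle_N = \langle f, \mathcal{L}_n g\rangle_N$ via the expansion $\langle \mathcal{L}_n f, g\rangle_N = \sum_{\ell,k}\langle f,Y_{\ell k}^{(d)}\rangle_N \langle Y_{\ell k}^{(d)}, g\rangle_N$, which is manifestly symmetric in $f$ and $g$; the same expansion with weights $h(\ell/n)$ handles $\mathcal{F}_{n,N}$. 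I expect no real obstacle here; the main work is simply choosing which of these two equivalent readings to present and keeping the index bookkeeping clean.
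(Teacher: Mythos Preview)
Your proposal is correct, and you have astutely spotted that Definition~\ref{def:Self-adjoint} as literally written is trivially satisfied by any real-valued operator. The paper's own proof in fact establishes the stronger identity $\langle \mathcal{T}f,g\rangle_N = \langle f,\mathcal{T}g\rangle_N$ (your ``second reading''), and it does so by exactly the expansion you describe: pull the operator out by bilinearity to obtain $\sum_{\ell,k} h(\ell/n)\,\langle f,Y_{\ell k}^{(d)}\rangle_N\,\langle Y_{\ell k}^{(d)},g\rangle_N$, observe the symmetry in $f$ and $g$, and then specialize $h\equiv 1$ for $\mathcal{L}_n$. One small correction: you do not need to invoke quadrature exactness of degree $2n$ for this step --- only bilinearity of $\langle\cdot,\cdot\rangle_N$ is used --- so that remark can be dropped.
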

\begin{proof}
Given $f,g \in \mathcal{C}(\mathbb{S}^d)$, we obtain
\begin{equation*}
    \begin{aligned}
        \left \langle \mathcal{F}_{n,N} f ,g \right \rangle_N &= \left \langle  \sum_{\ell=0}^{n}h_{\ell}\sum_{k=1}^{Z(d,\ell)}  \left \langle f, Y_{\ell k}^{(d)} \right \rangle_N Y_{\ell k} ^{(d)} , g\right \rangle_N   
        = \sum_{\ell=0}^{n}h_{\ell}\sum_{k=1}^{Z(d,\ell)} \left \langle f,Y_{\ell k}^{(d)} \right \rangle_N   \left \langle Y_{\ell k}^{(d)} ,g \right \rangle_N  \\
        &=\left \langle f,\,\, \sum_{\ell=0}^{n} h_{\ell} \sum_{k=1}^{Z(d,\ell)} \left \langle Y_{\ell k}^{(d)} , g  \right \rangle_N Y_{\ell k}^{(d)}   \right \rangle_N 
        = \left \langle f,\, \,\mathcal{F}_{n,N} g  \right \rangle_N .
    \end{aligned}
\end{equation*}
If we take $h_{\ell}=1$ for $\ell=0,\ldots,n$ in the above equation, then we get 
\begin{equation*}
    \left \langle \mathcal{L}_{n} f ,g \right \rangle_N  = \left \langle  f , \mathcal{L}_{n}g \right \rangle_N. 
\end{equation*}
Thus, we have completed the proof.
\end{proof}

The following theorem states that the generalized hyperinterpolation operators $\mathfrak{G}\mathfrak{L}_{n}$ are hyper self-adjoint and commutative
with the hyperinterpolation operator $\mathcal{L}_n$.
\begin{theorem}\label{thm:GeneralizedHyperSelfAdjoint}
Let $f,g \in \mathcal{C}(\mathbb{S}^d)$. The generalized hyperinterpolation operators $\mathfrak{G}\mathfrak{L}_{n}$ are hyper self-adjoint, i.e.,
\begin{equation*}
    \langle \mathfrak{G}\mathfrak{L}_{n} f, g \rangle_N = \langle f, \mathfrak{G}\mathfrak{L}_{n} g \rangle_N
\end{equation*}
and commutative with $\mathcal{L}_n$, 
\begin{equation*}
    \mathfrak{G}\mathfrak{L}_{n} (\mathcal{L}_n f)  = \mathcal{L}_n (\mathfrak{G}\mathfrak{L}_{n} f) = \mathfrak{G}\mathfrak{L}_{n} f.
\end{equation*}
\end{theorem}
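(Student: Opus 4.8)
The plan is to lean on the discrete Fourier representation of $\mathfrak{G}\mathfrak{L}_n$ in \eqref{equ:GeneralizedHyper}, namely $\mathfrak{G}\mathfrak{L}_{n} f = \sum_{\ell=0}^{n} a_{n\ell}\sum_{k=1}^{Z(d,\ell)}\langle f, Y_{\ell k}^{(d)}\rangle_N\, Y_{\ell k}^{(d)}$, which has exactly the shape of the filtered hyperinterpolation operator treated in Proposition \ref{prop:filter_hyper_adjoint}, with the filter weights $h_\ell$ replaced by the coefficients $a_{n\ell}$. For the hyper self-adjointness I would substitute this expression into $\langle \mathfrak{G}\mathfrak{L}_n f, g\rangle_N$ and use the bilinearity of the discrete (semi) inner product \eqref{equ:semiinner} to pull the finite sums outside, obtaining
\begin{equation*}
\langle \mathfrak{G}\mathfrak{L}_n f, g\rangle_N = \sum_{\ell=0}^{n} a_{n\ell}\sum_{k=1}^{Z(d,\ell)}\langle f, Y_{\ell k}^{(d)}\rangle_N\,\langle Y_{\ell k}^{(d)}, g\rangle_N .
\end{equation*}
The right-hand side is manifestly symmetric in the pair $(f,g)$, so it equals the same expression with $f$ and $g$ interchanged, which is $\langle f, \mathfrak{G}\mathfrak{L}_n g\rangle_N$ after one more use of the symmetry of $\langle\cdot,\cdot\rangle_N$. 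This is essentially the two-line computation of Proposition \ref{prop:filter_hyper_adjoint}, so once the representation is in hand the first assertion is free.

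For the commutativity with $\mathcal{L}_n$, the key ingredient is that, under quadrature exactness $2n$, the system $\{Y_{\ell k}^{(d)}:0\le\ell\le n,\ 1\le k\le Z(d,\ell)\}$ is orthonormal with respect to $\langle\cdot,\cdot\rangle_N$ just as it is with respect to $\langle\cdot,\cdot\rangle_{L_2}$: the product of any two such harmonics lies in $\mathbb{P}_{2n}(\mathbb{S}^d)$, so the quadrature integrates it exactly and $\langle Y_{\ell k}^{(d)}, Y_{\ell' k'}^{(d)}\rangle_N = \langle Y_{\ell k}^{(d)}, Y_{\ell' k'}^{(d)}\rangle_{L_2} = \delta_{\ell\ell'}\delta_{kk'}$. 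Using this I would first compute the discrete Fourier coefficients of $\mathcal{L}_n f$: expanding $\mathcal{L}_n f$ via \eqref{equ:hyper} and applying orthonormality gives $\langle \mathcal{L}_n f, Y_{\ell k}^{(d)}\rangle_N = \langle f, Y_{\ell k}^{(d)}\rangle_N$ for all $\ell\le n$ and all $k$ (this is the discrete counterpart of $\mathcal{L}_n$ being a projection onto $\mathbb{P}_n(\mathbb{S}^d)$). Substituting these coefficients into the definition of $\mathfrak{G}\mathfrak{L}_n$ immediately yields $\mathfrak{G}\mathfrak{L}_n(\mathcal{L}_n f) = \mathfrak{G}\mathfrak{L}_n f$. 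Symmetrically, the same orthonormality gives $\langle \mathfrak{G}\mathfrak{L}_n f, Y_{\ell k}^{(d)}\rangle_N = a_{n\ell}\langle f, Y_{\ell k}^{(d)}\rangle_N$, and feeding this into \eqref{equ:hyper} gives $\mathcal{L}_n(\mathfrak{G}\mathfrak{L}_n f) = \sum_{\ell=0}^{n} a_{n\ell}\sum_{k=1}^{Z(d,\ell)}\langle f, Y_{\ell k}^{(d)}\rangle_N Y_{\ell k}^{(d)} = \mathfrak{G}\mathfrak{L}_n f$, using that $\mathfrak{G}\mathfrak{L}_n f \in \mathbb{P}_n(\mathbb{S}^d)$. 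Chaining the two identities gives $\mathfrak{G}\mathfrak{L}_n\mathcal{L}_n = \mathcal{L}_n\mathfrak{G}\mathfrak{L}_n = \mathfrak{G}\mathfrak{L}_n$ on $\mathcal{C}(\mathbb{S}^d)$.

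I do not anticipate a serious obstacle: both parts reduce to finite-sum bookkeeping once $2n$-exactness is invoked. The one point requiring care — and the closest thing to a hard part — is making explicit that generalized hyperinterpolation is being used here under the stronger exactness $2n$ (rather than the minimal $n+1$ recalled in the Remark after Definition \ref{def:GeneralizedHyper}), since it is exactly this exactness that makes products $Y_{\ell k}^{(d)} Y_{\ell' k'}^{(d)}$ integrate exactly and hence makes $\mathcal{L}_n$ act as the identity on $\mathbb{P}_n(\mathbb{S}^d)\supseteq \operatorname{ran}\mathfrak{G}\mathfrak{L}_n$. Conceptually the commutativity statement then reads: generalized hyperinterpolation factors through the hyperinterpolation projection $\mathcal{L}_n$, which is the natural algebraic content to emphasize.
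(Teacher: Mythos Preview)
Your proposal is correct and follows essentially the same route as the paper: both expand $\mathfrak{G}\mathfrak{L}_n f$ via \eqref{equ:GeneralizedHyper}, use bilinearity of $\langle\cdot,\cdot\rangle_N$ for the self-adjointness, and invoke the discrete orthonormality $\langle Y_{\ell k}^{(d)}, Y_{\ell' k'}^{(d)}\rangle_N=\delta_{\ell\ell'}\delta_{kk'}$ (from $2n$-exactness) for the commutativity. Your organization of the commutativity argument---first computing the discrete Fourier coefficients $\langle \mathcal{L}_n f, Y_{\ell k}^{(d)}\rangle_N$ and $\langle \mathfrak{G}\mathfrak{L}_n f, Y_{\ell k}^{(d)}\rangle_N$ and then substituting---is slightly cleaner than the paper's direct double-sum expansion, but the content is identical.
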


\begin{proof}
By direct computation, we have
\begin{equation*}
    \langle \mathfrak{G}\mathfrak{L}_{n} f, g \rangle_N = \left \langle \sum_{\ell=0}^{n}a_{n\ell} \sum_{k=1}^{Z(d,\ell)} \langle f, Y_{\ell k}^{(d)} \rangle_N Y_{\ell k}^{(d)} , g \right \rangle_N
    =  \sum_{\ell=0}^{n}a_{n\ell} \sum_{k=1}^{Z(d,\ell)} \langle f, Y_{\ell k}^{(d)} \rangle_N  \langle  Y_{\ell k}^{(d)} ,g \rangle_N
\end{equation*}   
and 
\begin{equation*}
    \langle f, \mathfrak{G}\mathfrak{L}_{n} g \rangle_N =  \left \langle f,  \sum_{\ell=0}^{n}a_{n\ell} \sum_{k=1}^{Z(d,\ell)} \langle g, Y_{\ell k}^{(d)} \rangle_N Y_{\ell k}^{(d)}  \right \rangle_N
    = \sum_{\ell=0}^{n}a_{n\ell} \sum_{k=1}^{Z(d,\ell)} \langle f, Y_{\ell k}^{(d)} \rangle_N  \langle g,  Y_{\ell k}^{(d)}  \rangle_N.
\end{equation*}

Next, we obtain
\begin{equation*}
    \begin{aligned}
        \mathfrak{G}\mathfrak{L}_{n} (\mathcal{L}_n f) &= \sum_{\ell=0}^{n}a_{n\ell} \sum_{k=1}^{Z(d,\ell)} \left \langle \sum_{\ell'=0}^{n}\sum_{k'=1}^{Z(d,\ell')} \langle f, Y_{\ell' k'}^{(d)} \rangle_N Y_{\ell' k'}^{(d)} , Y_{\ell k}^{(d)} \right \rangle_N Y_{\ell k}^{(d)} (\mathbf{x}), \\
        &= \sum_{\ell=0}^{n}a_{n\ell} \sum_{k=1}^{Z(d,\ell)} \sum_{\ell'=0}^{n}\sum_{k'=1}^{Z(d,\ell')} \langle f, Y_{\ell' k'}^{(d)} \rangle_N \left \langle  Y_{\ell' k'}^{(d)} , Y_{\ell k}^{(d)} \right \rangle_N Y_{\ell k}^{(d)} (\mathbf{x}) , \\
        &=\sum_{\ell=0}^{n}a_{n\ell} \sum_{k=1}^{Z(d,\ell)} \langle f, Y_{\ell k}^{(d)} \rangle_N  Y_{\ell k}^{(d)} (\mathbf{x}), 
    \end{aligned}
\end{equation*}
and
\begin{equation*}
    \begin{aligned}
        \mathcal{L}_n (\mathfrak{G}\mathfrak{L}_{n} f) & = \sum_{\ell=0}^{n}\sum_{k=1}^{Z(d,\ell)} \left \langle \sum_{\ell'=0}^{n} a_{n\ell'}\sum_{k'=1}^{Z(d,\ell')} \langle f, Y_{\ell' k'}^{(d)} \rangle_N Y_{\ell' k'}^{(d)}, Y_{\ell k}^{(d)} \right \rangle_N Y_{\ell k}^{(d)} (\mathbf{x}),\\
        &=\sum_{\ell=0}^{n}\sum_{k=1}^{Z(d,\ell)}  \sum_{\ell'=0}^{n} a_{n\ell'}\sum_{k'=1}^{Z(d,\ell')} \langle f, Y_{\ell' k'}^{(d)} \rangle_N \left \langle  Y_{\ell' k'}^{(d)}, Y_{\ell k}^{(d)} \right \rangle_N Y_{\ell k}^{(d)} (\mathbf{x}),\\
        &=\sum_{\ell=0}^{n}a_{n\ell} \sum_{k=1}^{Z(d,\ell)} \langle f, Y_{\ell k}^{(d)} \rangle_N  Y_{\ell k}^{(d)} (\mathbf{x}).
    \end{aligned}  
\end{equation*}
Thus, we have completed the proof.
\end{proof}

Then we can define the \emph{hyper projection} operator which parallels with 
the definition of projection \cite{rudin2011functional}. 

\begin{definition}\label{def:hyper-projection}
An operator $\mathcal{T}$ is a \emph{hyper projection} operator, if it satisfies
\begin{enumerate}
    \item[(i)] $\mathcal{T}$ is {hyper self-adjoint};
    \item[(ii)] $\mathcal{T}^2= \mathcal{T}$;
\end{enumerate}
\end{definition}

\begin{theorem}\label{prop:bounded-self-adjoint}
Hyperinterpolation operator $\mathcal{L}_n$ is a hyper projection operator.
\end{theorem}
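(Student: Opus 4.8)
The plan is to verify the two defining properties of a hyper projection operator from Definition \ref{def:hyper-projection} directly for $\mathcal{L}_n$. Property (i), that $\mathcal{L}_n$ is hyper self-adjoint, is already established in Proposition \ref{prop:filter_hyper_adjoint} (it is the special case $h_\ell \equiv 1$ of the filtered operator), so I would simply cite that result. The only remaining work is property (ii): the idempotence $\mathcal{L}_n^2 = \mathcal{L}_n$.

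To prove idempotence, I would take an arbitrary $f \in \mathcal{C}(\mathbb{S}^d)$ and compute $\mathcal{L}_n(\mathcal{L}_n f)$ by substituting the definition \eqref{equ:hyper} into itself. Writing $\mathcal{L}_n f = \sum_{\ell'=0}^n \sum_{k'=1}^{Z(d,\ell')} \langle f, Y_{\ell' k'}^{(d)} \rangle_N Y_{\ell' k'}^{(d)}$, one gets
\begin{equation*}
\mathcal{L}_n(\mathcal{L}_n f) = \sum_{\ell=0}^{n}\sum_{k=1}^{Z(d,\ell)} \left\langle \sum_{\ell'=0}^{n}\sum_{k'=1}^{Z(d,\ell')} \langle f, Y_{\ell' k'}^{(d)} \rangle_N Y_{\ell' k'}^{(d)}, Y_{\ell k}^{(d)} \right\rangle_N Y_{\ell k}^{(d)}.
\end{equation*}
Using bilinearity of $\langle \cdot, \cdot \rangle_N$ to pull the inner sum out, the key fact is the discrete orthonormality $\langle Y_{\ell' k'}^{(d)}, Y_{\ell k}^{(d)} \rangle_N = \delta_{\ell \ell'}\delta_{k k'}$, which holds because the product $Y_{\ell' k'}^{(d)} Y_{\ell k}^{(d)}$ lies in $\mathbb{P}_{2n}(\mathbb{S}^d)$ and the quadrature rule is exact of degree $2n$, so $\langle Y_{\ell' k'}^{(d)}, Y_{\ell k}^{(d)} \rangle_N = \langle Y_{\ell' k'}^{(d)}, Y_{\ell k}^{(d)} \rangle_{L_2} = \delta_{\ell \ell'}\delta_{k k'}$ by the $L_2$-orthonormality of the spherical harmonics. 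Collapsing the double sum over $\ell', k'$ via these Kronecker deltas leaves exactly $\sum_{\ell=0}^{n}\sum_{k=1}^{Z(d,\ell)} \langle f, Y_{\ell k}^{(d)} \rangle_N Y_{\ell k}^{(d)} = \mathcal{L}_n f$. This is precisely the $a_{n\ell}\equiv 1$ instance of the computation already carried out in the proof of Theorem \ref{thm:GeneralizedHyperSelfAdjoint}, so one could alternatively invoke that theorem with $\mathfrak{G}\mathfrak{L}_n = \mathcal{L}_n$.

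There is essentially no obstacle here; the argument is a routine bookkeeping computation resting entirely on quadrature exactness of degree $2n$ together with $L_2$-orthonormality. The only point requiring a moment's care is confirming that the relevant products of basis functions have degree at most $2n$ so that exactness applies — but this is immediate since each $Y_{\ell k}^{(d)}$ has degree $\ell \le n$. Having verified (i) by citation and (ii) by this calculation, the statement follows from Definition \ref{def:hyper-projection}.
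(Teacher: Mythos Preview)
Your proposal is correct and matches the paper's own proof: it too cites Proposition~\ref{prop:filter_hyper_adjoint} for hyper self-adjointness and then asserts $\mathcal{L}_n(\mathcal{L}_n)=\mathcal{L}_n$, simply declaring the latter ``easy to check.'' You have merely spelled out that check via discrete orthonormality (quadrature exactness of degree $2n$ applied to products $Y_{\ell' k'}^{(d)} Y_{\ell k}^{(d)}$), which is exactly the standard computation the paper leaves implicit.
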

\begin{proof}
From Proposition \ref{prop:filter_hyper_adjoint}, we know that $\mathcal{L}_n$ is hyper self-adjoint.
It is easy to check that $\mathcal{L}_{n}(\mathcal{L}_n)=\mathcal{L}_n$.  Thus, we complete the proof.
\end{proof}
    
\begin{remark}
Filtered hyperinterpolation operator $\mathcal{F}_{n,N}$ fails to be a hyper projection operator due to $\mathcal{F}_{n,N} \mathcal{F}_{n,N} \neq  \mathcal{F}_{n,N}$. Similarly,
hard thresholding hyperinterpolation operator $\mathcal{H}_{n}^{\lambda}$ is not a hyper projection operator as it lacks  
hyper self-adjointness. Lasso hyperinterpolation operator $\mathcal{L}_{n}^{\lambda}$ fails to be both hyper self-adjoint and idempotent. Consequently, it does not qualify as a hyper projection operator.
\end{remark}

\begin{remark}
    The generalized hyperinterpolation operators $\mathfrak{G}\mathfrak{L}_{n}$ are not idempotent. Indeed, we have
    \begin{equation*}
        \begin{aligned}
            \mathfrak{G}\mathfrak{L}_{n} (  \mathfrak{G}\mathfrak{L}_{n}f(\mathbf{x})) &= \sum_{\ell=0}^{n}a_{n\ell} \sum_{k=1}^{Z(d,\ell)} \left \langle \sum_{\ell'=0}^{n}a_{n\ell'} \sum_{k'=1}^{Z(d,\ell')} \langle f, Y_{\ell' k'}^{(d)} \rangle_N Y_{\ell' k'}^{(d)} (\mathbf{x}), Y_{\ell k}^{(d)} \right \rangle_N Y_{\ell k}^{(d)} (\mathbf{x}),\\
            &= \sum_{\ell=0}^{n}a_{n\ell} \sum_{k=1}^{Z(d,\ell)} \sum_{\ell'=0}^{n}a_{n\ell'} \sum_{k'=1}^{Z(d,\ell')} \langle f, Y_{\ell' k'}^{(d)} \rangle_N \left \langle  Y_{\ell' k'}^{(d)} (\mathbf{x}), Y_{\ell k}^{(d)} \right \rangle_N Y_{\ell k}^{(d)} (\mathbf{x}),\\
            &= \sum_{\ell=0}^{n}a_{n\ell}^2 \sum_{k=1}^{Z(d,\ell)} \langle f, Y_{\ell k}^{(d)} \rangle_N Y_{\ell k}^{(d)} (\mathbf{x}),
        \end{aligned}
    \end{equation*}
    where $a_{n\ell} =1 $ does not hold for all $n=0,1, \ldots,$ and $\ell=0, \ldots, n$. Thus,  $\mathfrak{G}\mathfrak{L}_{n}$ can not be a hyper projection operator.
\end{remark}

%\begin{definition}\label{def:hyper-orthogonal}
%Given $f,g \in \mathcal{C}(\mathbb{S}^d)$, $f$ and $g$ are called \emph{hyper orthogonal} if 
%\begin{equation}\label{equ:hyper-orthogonal}
%    \langle f, g \rangle_N = 0 .
%\end{equation}   
%\end{definition}
%%%-----------------

\subsection{Hyper semigroup}\label{sec:hyper_semigroup}

\noindent Next, we introduce the \emph{hyper semigroup} $\mathfrak{B}(\mathbb{S}^d)$ as follows.

\begin{definition}\label{hypersemigroup}
Let $ \mathfrak{B}(\mathbb{S}^d)$ be a set of all operators $\mathcal{T}: \mathcal{C}(\mathbb{S}^d) \to \mathbb{P}_n(\mathbb{S}^d)$. We call  $\mathfrak{B}(\mathbb{S}^d)$ a \emph{hyper semigroup} if:
\begin{enumerate}
    \item \textbf{(Semigroup Structure)} For all $\mathcal{T}_1, \mathcal{T}_2 \in \mathfrak{B}$, the composition $\mathcal{T}_1 \circ \mathcal{T}_2$ belongs to $\mathfrak{B}$ and composition is associative:
        \[
        (\mathcal{T}_1 \circ \mathcal{T}_2) \circ \mathcal{T}_3 = \mathcal{T}_1 \circ (\mathcal{T}_2 \circ \mathcal{T}_3).
        \]
    \item \textbf{(Pythagorean Identity)} For all $\mathcal{T} \in \mathfrak{B}$ and $f \in \mathcal{C}(\mathbb{S}^d)$:
        \[
        \langle \mathcal{T}f, \mathcal{T}f \rangle_N + \langle \mathcal{T}f - f, \mathcal{T}f - f \rangle_N = \langle f, f \rangle_N.
        \]
\end{enumerate}
\end{definition}

\begin{remark}\label{rem:semigroup}
The concept of a \emph{hyper semigroup} is well-defined and distinct from classical algebraic structures:
\begin{itemize}
    \item \textbf{Semigroup vs. Algebra}: Unlike a complex algebra \cite[Definition 10.1]{rudin2011functional}, a hyper semigroup requires \emph{composition closure} and \emph{associativity} but not a vector space structure (i.e., no addition/scalar multiplication axioms). 
    \item \textbf{Non-emptiness}: $\mathfrak{B}$ contains key operators like the hard thresholding hyperinterpolation $\mathcal{H}_n^{\lambda}$ and hyperinterpolation $\mathcal{L}_n$ \cite[Lemma 3.1]{an2023hard}, \cite[Lemma 5]{sloan1995hyperinterpolation}, which satisfy both the Pythagorean identity and semigroup axioms under composition.
    \item \textbf{Composition Closure}: For $\mathcal{T}_a, \mathcal{T}_b \in \{\mathcal{L}_n, \mathcal{H}_n^\lambda\}$, composites reduce to $\mathcal{H}_{\min(m,n)}^\lambda$ or $\mathcal{L}_{\min(m,n)}$ (see Theorem \ref{thm:ideal} and equations \eqref{equ:prime_ideal_1}--\eqref{equ:prime_ideal_2}).
\end{itemize}
\end{remark}

\begin{proposition}\label{prop:1}
If $\mathcal{T} \in \mathfrak{B}(\mathbb{S}^d)$, then the following holds
\begin{enumerate}
    \item[(i)] $\langle \mathcal{T}f,f \rangle_N = \| \mathcal{T} f \|_{\ell^2(\mathbf{w})}^2$;
    \item[(ii)] $\| \mathcal{T} f \|_{\ell^2(\mathbf{w})} \leq \| f\|_{\ell^2(\mathbf{w})}$.
\end{enumerate}
\end{proposition}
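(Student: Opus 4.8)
The plan is to derive both claims directly from the Pythagorean identity that every $\mathcal{T} \in \mathfrak{B}(\mathbb{S}^d)$ satisfies by Definition \ref{hypersemigroup}, expanding the discrete (semi) inner product using its bilinearity (which follows from the bilinearity of $\langle \cdot, \cdot \rangle_N$ in \eqref{equ:semiinner}). For part (i), I would start from
\[
\langle \mathcal{T}f, \mathcal{T}f \rangle_N + \langle \mathcal{T}f - f, \mathcal{T}f - f \rangle_N = \langle f, f \rangle_N,
\]
expand the middle term as $\langle \mathcal{T}f, \mathcal{T}f\rangle_N - 2\langle \mathcal{T}f, f\rangle_N + \langle f, f\rangle_N$, and cancel $\langle f,f\rangle_N$ from both sides. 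This leaves $2\langle \mathcal{T}f, \mathcal{T}f\rangle_N - 2\langle \mathcal{T}f, f\rangle_N = 0$, i.e. $\langle \mathcal{T}f, f\rangle_N = \langle \mathcal{T}f, \mathcal{T}f\rangle_N = \|\mathcal{T}f\|_{\ell^2(\mathbf{w})}^2$, using the definition \eqref{equ:seminorm} of the semi-norm.

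For part (ii), I would use the remaining content of the Pythagorean identity together with the fact that $\langle \mathcal{T}f - f, \mathcal{T}f - f \rangle_N = \|\mathcal{T}f - f\|_{\ell^2(\mathbf{w})}^2 \geq 0$, since $\langle \cdot, \cdot\rangle_N$ is a (semi) inner product and hence the associated quadratic form is nonnegative. Rearranging the identity gives
\[
\|\mathcal{T}f\|_{\ell^2(\mathbf{w})}^2 = \langle f,f\rangle_N - \|\mathcal{T}f - f\|_{\ell^2(\mathbf{w})}^2 \leq \langle f,f\rangle_N = \|f\|_{\ell^2(\mathbf{w})}^2,
\]
and taking square roots (both sides nonnegative) yields $\|\mathcal{T}f\|_{\ell^2(\mathbf{w})} \leq \|f\|_{\ell^2(\mathbf{w})}$.

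There is no real obstacle here: both parts are immediate algebraic consequences of the Pythagorean identity built into the definition of $\mathfrak{B}(\mathbb{S}^d)$. The only point requiring a word of care is the nonnegativity of $\langle \mathcal{T}f - f, \mathcal{T}f-f\rangle_N$ used in part (ii); this is guaranteed because $\langle \cdot,\cdot\rangle_N$ in \eqref{equ:semiinner} is a sum $\sum_j w_j g(\mathbf{x}_j)^2$ with positive weights $w_j$, so $\langle g, g\rangle_N \geq 0$ for every $g \in \mathcal{C}(\mathbb{S}^d)$ (even though it may vanish for nonzero $g$, as noted in Remark \ref{re:1} — but that does not affect the inequality). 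I would present the proof in two short displayed computations, one per item, keeping the exposition to a few lines.
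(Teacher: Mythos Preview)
Your proof of part (i) is essentially identical to the paper's: both expand the Pythagorean identity and cancel $\langle f,f\rangle_N$.

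For part (ii) you take a slightly different route. The paper uses part (i) together with the Cauchy--Schwarz inequality for $\langle\cdot,\cdot\rangle_N$:
\[
\|\mathcal{T}f\|_{\ell^2(\mathbf{w})}^2 = \langle \mathcal{T}f,f\rangle_N \le \|\mathcal{T}f\|_{\ell^2(\mathbf{w})}\,\|f\|_{\ell^2(\mathbf{w})},
\]
and then cancels one factor of $\|\mathcal{T}f\|_{\ell^2(\mathbf{w})}$. Your argument instead reads the inequality directly off the Pythagorean identity via $\|\mathcal{T}f - f\|_{\ell^2(\mathbf{w})}^2 \ge 0$. Both are correct; your version is marginally more self-contained (it avoids invoking Cauchy--Schwarz and sidesteps the trivial edge case $\|\mathcal{T}f\|_{\ell^2(\mathbf{w})}=0$ that the paper's cancellation tacitly assumes away), while the paper's version highlights how (ii) follows from (i).
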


\begin{proof}
(i) Since $\mathcal{T}$ satisfies Pythagorean theorem, it is clear that 
\begin{equation*}
    2\langle \mathcal{T}f,\mathcal{T}f \rangle_N - 2\langle \mathcal{T}f,f \rangle_N + \langle f, f \rangle_N = \langle f, f \rangle_N,
\end{equation*}
which implies that
\begin{equation*}
    \langle \mathcal{T}f,f \rangle_N = \| \mathcal{T} f \|_{\ell^2(\mathbf{w})}^2.
\end{equation*}
(ii) From (i), we have
\begin{equation*}
    \| \mathcal{T} f \|_{\ell^2(\mathbf{w})}^2 = \langle \mathcal{T}f,f \rangle_N \leq \| f\|_{\ell^2(\mathbf{w})} \|\mathcal{T} f\|_{\ell^2(\mathbf{w})},
\end{equation*}
where the inequality follows from the Cauchy-Schwarz inequality. 
\end{proof}

\begin{lemma}\label{lem:norm1}
    Let $\mathcal{T} \in \mathfrak{B}(\mathbb{S}^d)$ and let $\mathcal{T}^2 = \mathcal{T}$. Then $\|\mathcal{T}\|_{\ell^2(\mathbf{w})}=1$,
   where \begin{equation}
       \|\mathcal{T}\|_{\ell^2(\mathbf{w})} := \sup_{\|f\|_{\ell^2(\mathbf{w})}\neq 0} \frac{\|\mathcal{T} f\|_{\ell^2(\mathbf{w})}}{\| f\|_{\ell^2(\mathbf{w})}}  = \sup_{\| f\|_{\ell^2(\mathbf{w})} =1 } \|\mathcal{T} f\|_{\ell^2(\mathbf{w})}.
\end{equation}
\end{lemma}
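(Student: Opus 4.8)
The plan is to establish the two inequalities $\|\mathcal{T}\|_{\ell^2(\mathbf{w})}\le 1$ and $\|\mathcal{T}\|_{\ell^2(\mathbf{w})}\ge 1$ separately. For the upper bound, I would invoke Proposition~\ref{prop:1}(ii): since $\mathcal{T}\in\mathfrak{B}(\mathbb{S}^d)$, we have $\|\mathcal{T}f\|_{\ell^2(\mathbf{w})}\le\|f\|_{\ell^2(\mathbf{w})}$ for every $f\in\mathcal{C}(\mathbb{S}^d)$, which immediately gives $\|\mathcal{T}\|_{\ell^2(\mathbf{w})}\le 1$ from the definition of the operator semi-norm. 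Note that this step alone does not use idempotency.

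For the lower bound, the idempotency hypothesis $\mathcal{T}^2=\mathcal{T}$ enters. First I would observe that $\mathcal{T}$ is not identically zero as an operator into the semi-normed space: indeed, if $\|\mathcal{T}f\|_{\ell^2(\mathbf{w})}=0$ for all $f$, then applying $\mathcal{T}$ again and using Proposition~\ref{prop:1}(ii) would not yet give a contradiction, so instead I would argue directly. Pick any $f$ with $\|f\|_{\ell^2(\mathbf{w})}\neq 0$ and set $g=\mathcal{T}f$. By idempotency, $\mathcal{T}g=\mathcal{T}^2 f=\mathcal{T}f=g$, so $\|\mathcal{T}g\|_{\ell^2(\mathbf{w})}=\|g\|_{\ell^2(\mathbf{w})}$. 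Provided $\|g\|_{\ell^2(\mathbf{w})}\neq 0$, this exhibits a function on which $\mathcal{T}$ achieves the ratio $1$, hence $\|\mathcal{T}\|_{\ell^2(\mathbf{w})}\ge 1$, and combined with the upper bound we conclude equality. Here one should take $\mathcal{T}=\mathcal{L}_n$ (or more generally any nontrivial idempotent in $\mathfrak{B}$) and, e.g., $f$ a nonzero spherical polynomial in $\mathbb{P}_n(\mathbb{S}^d)$ whose values at the nodes are not all zero, so that $g=\mathcal{L}_n f=f$ has $\|g\|_{\ell^2(\mathbf{w})}\neq 0$; such $f$ exists because $\langle\cdot,\cdot\rangle_N$ restricted to $\mathbb{P}_n(\mathbb{S}^d)$ is a genuine inner product under the exactness-$2n$ assumption.

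The main obstacle is the potential degeneracy of the semi-norm: a priori one must rule out $\|\mathcal{T}f\|_{\ell^2(\mathbf{w})}=0$ for every $f$ that itself has $\|f\|_{\ell^2(\mathbf{w})}\neq 0$, since otherwise the supremum could be strictly less than $1$ (it could even be $0$ if $\mathcal{T}$ annihilates everything on the nodes). The resolution is exactly the idempotency plus the fact that on the range polynomial space the discrete inner product is nondegenerate (a consequence of quadrature exactness $2n$, as used implicitly throughout Section~\ref{sec:setting}): any $p\in\mathbb{P}_n(\mathbb{S}^d)$ in the image of $\mathcal{T}$ satisfies $\mathcal{T}p=p$ by $\mathcal{T}^2=\mathcal{T}$, and a nonzero such $p$ has $\|p\|_{\ell^2(\mathbf{w})}\neq 0$. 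This keeps the argument clean and avoids any delicate estimate.
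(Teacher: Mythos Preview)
Your approach is correct and essentially the same as the paper's: both obtain the upper bound from Proposition~\ref{prop:1}(ii) and the lower bound by applying $\mathcal{T}$ to something in its own range (you take $g=\mathcal{T}f$ and use $\mathcal{T}g=g$; the paper writes $\|\mathcal{T}f\|=\|\mathcal{T}^2 f\|\le \|\mathcal{T}\|_{\ell^2(\mathbf{w})}\|\mathcal{T}f\|$, which is the same manoeuvre after normalising). If anything, you are more careful than the paper in addressing the semi-norm degeneracy, noting that $g=\mathcal{T}f\in\mathbb{P}_n(\mathbb{S}^d)$ so that $g\neq 0$ forces $\|g\|_{\ell^2(\mathbf{w})}\neq 0$ by exactness~$2n$; the paper simply divides by $\|\mathcal{T}f\|_{\ell^2(\mathbf{w})}$ without comment.
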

\begin{proof}
    By direct computation, we have
    \begin{equation*}
        \|\mathcal{T}f\|_{\ell^2(\mathbf{w})} = \|\mathcal{T}^2f\|_{\ell^2(\mathbf{w})} = \left\| \mathcal{T} \left( \frac{\mathcal{T}f}{ \left\| \mathcal{T}f \right\|_{\ell^{2}(\mathbf{w})}} \right) \right\|_{\ell^{2}(\mathbf{w})}  \left\|  \mathcal{T}f \right\|_{\ell^{2}(\mathbf{w})} \leq \|\mathcal{T}\|_{\ell^2(\mathbf{w})} \|\mathcal{T}f\|_{\ell^2(\mathbf{w})}.
    \end{equation*}
Then combining the result (ii) in Proposition \ref{prop:1}, we deduce that $\|\mathcal{T}\|_{\ell^2(\mathbf{w})}=1$.
\end{proof}

\begin{definition}\label{def:hyper_positive}
 An operator $\mathcal{T}$ is called a \emph{hyper nonnegative operator} if 
 \begin{equation*}
    \langle \mathcal{T}f, f \rangle_N \geq 0 \quad \forall f \in \mathcal{C}(\mathbb{S}^d).
 \end{equation*}  
Given $f,g \in \mathcal{C}(\mathbb{S}^d)$, $f$ and $g$ are called \emph{hyper orthogonal} if 
\begin{equation}\label{equ:hyper-orthogonal}
    \langle f, g \rangle_N = 0 .
\end{equation}   
\end{definition}

\begin{example}
Hyperinterpolation operator $\mathcal{L}_n$, hard thresholding hyperinterpolation operator $\mathcal{H}_n^{\lambda}$,
Lasso hyperinterpolation operator $\mathcal{L}_n^{\lambda}$ and filtered hyperinterpolation operator $\mathcal{F}_{n,N}$ are hyper nonnegative operators. However, 
the generalized hyperinterpolation operator $\mathfrak{G}\mathfrak{L}_n$ may not be a nonnegative operator. Indeed, for any $f \in \mathcal{C}(\mathbb{S}^d)$, we have
\begin{equation*}
    \langle \mathfrak{G}\mathfrak{L}_{n} f, f \rangle_N = \left \langle \sum_{\ell=0}^{n}a_{n\ell} \sum_{k=1}^{Z(d,\ell)} \langle f, Y_{\ell k}^{(d)} \rangle_N Y_{\ell k}^{(d)} , f \right \rangle_N
    =  \sum_{\ell=0}^{n}a_{n\ell} \sum_{k=1}^{Z(d,\ell)} \langle f, Y_{\ell k}^{(d)} \rangle_N^2.
\end{equation*}   
We cannot judge the sign of $\langle \mathfrak{G}\mathfrak{L}_{n} f, f \rangle_N$ due to the existence of $a_{n\ell}$'s.
\end{example}

\begin{theorem}\label{thm:11}
    If $\mathcal{T} \in \mathfrak{B}(\mathbb{S}^d)$ and  $\langle \mathcal{T} f, f \rangle_N =0 $ 
    for every $f \in \mathcal{C}(\mathbb{S}^d)$, then $\mathcal{T}=0$.
\end{theorem}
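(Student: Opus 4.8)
The plan is to reduce the statement to the classical fact that a polynomial whose discrete semi-norm vanishes must itself be identically zero, once we exploit the quadrature exactness. The crucial observation is that although $\|\cdot\|_{\ell^2(\mathbf{w})}$ is only a semi-norm on $\mathcal{C}(\mathbb{S}^d)$ (Remark \ref{re:1}), it is a genuine norm when restricted to the range $\mathbb{P}_n(\mathbb{S}^d)$ of any $\mathcal{T}\in\mathfrak{B}(\mathbb{S}^d)$: for $p\in\mathbb{P}_n(\mathbb{S}^d)$ we have $p^2\in\mathbb{P}_{2n}(\mathbb{S}^d)$, so the assumed exactness of degree $2n$ yields
\begin{equation*}
    \|p\|_{\ell^2(\mathbf{w})}^2 = \sum_{j=1}^N w_j\, p(\mathbf{x}_j)^2 = \int_{\mathbb{S}^d} p(\mathbf{x})^2\,{\rm d}\omega_d(\mathbf{x}) = \|p\|_{L_2}^2 .
\end{equation*}

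First I would invoke Proposition \ref{prop:1}(i): since $\mathcal{T}\in\mathfrak{B}(\mathbb{S}^d)$, for every $f\in\mathcal{C}(\mathbb{S}^d)$ we have $\langle \mathcal{T}f,f\rangle_N = \|\mathcal{T}f\|_{\ell^2(\mathbf{w})}^2$. The hypothesis $\langle \mathcal{T}f,f\rangle_N=0$ for all $f$ therefore forces $\|\mathcal{T}f\|_{\ell^2(\mathbf{w})}=0$ for all $f$. This is the step where the Pythagorean identity is essential — it is precisely what rules out the real-Hilbert-space counterexamples (rotations) for which $\langle \mathcal{T}f,f\rangle=0$ need not imply $\mathcal{T}=0$; here the extra structure of $\mathfrak{B}(\mathbb{S}^d)$ collapses $\langle\mathcal{T}f,f\rangle_N$ to a square of a semi-norm, so no polarization argument is needed.

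Next I would apply the displayed identity above with $p=\mathcal{T}f\in\mathbb{P}_n(\mathbb{S}^d)$, obtaining $\|\mathcal{T}f\|_{L_2} = \|\mathcal{T}f\|_{\ell^2(\mathbf{w})} = 0$. Since $\mathcal{T}f$ is a (continuous) spherical polynomial and $\|\cdot\|_{L_2}$ is a genuine norm on $L_2(\mathbb{S}^d)$, this gives $\mathcal{T}f\equiv 0$ on $\mathbb{S}^d$. As $f\in\mathcal{C}(\mathbb{S}^d)$ was arbitrary, $\mathcal{T}=0$, which completes the proof.

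The only point requiring care — the ``main obstacle'' — is the passage from ``$\|\mathcal{T}f\|_{\ell^2(\mathbf{w})}=0$'' to ``$\mathcal{T}f\equiv 0$'': a priori the semi-norm vanishing only says $\mathcal{T}f$ vanishes at the nodes $\mathbf{x}_1,\dots,\mathbf{x}_N$, which for a general continuous function says nothing. It is the combination of $\mathcal{T}f$ lying in $\mathbb{P}_n(\mathbb{S}^d)$ together with quadrature exactness $2n$ that upgrades this to vanishing in $L_2$, hence everywhere. I would make sure to state explicitly that $\mathcal{T}\colon\mathcal{C}(\mathbb{S}^d)\to\mathbb{P}_n(\mathbb{S}^d)$ (part of the definition of $\mathfrak{B}(\mathbb{S}^d)$) is what makes this work.
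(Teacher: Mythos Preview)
Your proof is correct and follows the same route as the paper: invoke Proposition~\ref{prop:1}(i) to obtain $\|\mathcal{T}f\|_{\ell^2(\mathbf{w})}=0$, then conclude $\mathcal{T}f=0$. In fact your version is more complete, since the paper's proof jumps directly from $\|\mathcal{T}f\|_{\ell^2(\mathbf{w})}=0$ to $\mathcal{T}f=0$ without spelling out the quadrature-exactness argument you give to pass from the semi-norm to the $L_2$ norm on $\mathbb{P}_n(\mathbb{S}^d)$.
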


\begin{proof}
From Proposition \ref{prop:1} (i), we have
\begin{equation*}
    \langle \mathcal{T}f,f \rangle_N = \| \mathcal{T} f \|_{\ell^2(\mathbf{w})}^2=0, \quad \forall f \in \mathcal{C}(\mathbb{S}^d)
\end{equation*}
Thus, we deduce that $\mathcal{T}f=0$.
\end{proof}

\begin{corollary}
    If $\mathcal{T} \in \mathfrak{B}(\mathbb{S}^d)$, $\mathcal{S} \in \mathfrak{B}(\mathbb{S}^d)$,and
    \begin{equation*}
        \langle \mathcal{T}f,f \rangle_N = \langle \mathcal{S}f,f \rangle_N 
    \end{equation*}
    for every $f \in \mathcal{C}(\mathbb{S}^d)$, then $\mathcal{S}=\mathcal{T}$.
\end{corollary}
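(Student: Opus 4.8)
The plan is to reduce the statement to Theorem~\ref{thm:11} by passing to the pointwise difference. Set $\mathcal{U}f := \mathcal{T}f - \mathcal{S}f \in \mathbb{P}_n(\mathbb{S}^d)$; the hypothesis $\langle \mathcal{T}f,f\rangle_N = \langle \mathcal{S}f,f\rangle_N$ for every $f \in \mathcal{C}(\mathbb{S}^d)$ is precisely $\langle \mathcal{U}f,f\rangle_N = 0$ for every $f$. If one can run the argument of Theorem~\ref{thm:11} for $\mathcal{U}$ — Proposition~\ref{prop:1}(i) gives $\langle \mathcal{U}f,f\rangle_N = \|\mathcal{U}f\|_{\ell^2(\mathbf{w})}^2$, so $\mathcal{U}f$ vanishes at all the quadrature nodes, and since $\mathcal{U}f \in \mathbb{P}_n(\mathbb{S}^d)$ while $\|\cdot\|_{\ell^2(\mathbf{w})}$ is a genuine norm on $\mathbb{P}_n(\mathbb{S}^d)$ (for $p \in \mathbb{P}_n(\mathbb{S}^d)$ one has $p^2 \in \mathbb{P}_{2n}(\mathbb{S}^d)$, so exactness of degree $2n$ forces $\|p\|_{\ell^2(\mathbf{w})} = 0 \Rightarrow \int_{\mathbb{S}^d} p^2\,{\rm d}\omega_d = 0 \Rightarrow p \equiv 0$) — then $\mathcal{U}f = 0$ for all $f$, that is, $\mathcal{T} = \mathcal{S}$.

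The delicate point, and the one I expect to be the main obstacle, is the appeal to Proposition~\ref{prop:1}(i) for the difference $\mathcal{U}$: that identity is built on the Pythagorean axiom of $\mathfrak{B}(\mathbb{S}^d)$, and neither that axiom nor composition closure need survive subtraction; worse, some members of $\mathfrak{B}(\mathbb{S}^d)$ (for instance $\mathcal{H}_n^{\lambda}$) are nonlinear, so $\mathcal{U}$ need not even be additive. I would close this gap along one of two lines. First, for the \emph{linear} members of $\mathfrak{B}(\mathbb{S}^d)$ — hyperinterpolation, filtered, and generalized hyperinterpolation — the difference $\mathcal{U}$ is linear and, by Proposition~\ref{prop:filter_hyper_adjoint} and Theorem~\ref{thm:GeneralizedHyperSelfAdjoint}, hyper self-adjoint, so $\langle \mathcal{U}f,g\rangle_N = \langle f,\mathcal{U}g\rangle_N$; expanding $\langle \mathcal{U}(f+g),f+g\rangle_N = 0$ and using $\langle \mathcal{U}f,f\rangle_N = \langle \mathcal{U}g,g\rangle_N = 0$ gives $\langle \mathcal{U}f,g\rangle_N + \langle \mathcal{U}g,f\rangle_N = 0$, and self-adjointness collapses this to $\langle \mathcal{U}f,g\rangle_N = 0$ for all $f,g$ (this is exactly what removes the skew-adjoint obstruction that blocks the real-scalar polarization in general); taking $g = \mathcal{U}f$ then yields $\|\mathcal{U}f\|_{\ell^2(\mathbf{w})} = 0$, hence $\mathcal{U}f = 0$ as above. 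Second, for the concrete semigroup generated by $\mathcal{L}_n$ and $\mathcal{H}_n^{\lambda}$, one can instead invoke the explicit reduction of composites recorded in Remark~\ref{rem:semigroup} and Theorem~\ref{thm:ideal} and verify the conclusion directly, operator by operator.

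I would present the first line as the main argument, since it is transparent and parallels the classical uniqueness statement for bounded self-adjoint operators in \cite{rudin2011functional}, and mention the second only as a complement covering the nonlinear members. It is worth emphasizing that the self-adjointness hypothesis that makes the polarization go through is genuinely needed here: over $\mathbb{R}$, $\langle \mathcal{U}f,f\rangle_N \equiv 0$ by itself forces only that $\mathcal{U}$ is hyper skew-adjoint, not that $\mathcal{U} = 0$.
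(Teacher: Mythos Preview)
The paper gives no proof for this corollary; it is placed immediately after Theorem~\ref{thm:11}, and the intended argument is evidently to apply that theorem to $\mathcal{T}-\mathcal{S}$. You have put your finger on precisely why this is not automatic: Theorem~\ref{thm:11} works through Proposition~\ref{prop:1}(i), which rests on the Pythagorean axiom of Definition~\ref{hypersemigroup}, and $\mathfrak{B}(\mathbb{S}^d)$ is a semigroup under composition, not a linear space, so nothing forces $\mathcal{T}-\mathcal{S}$ to inherit that axiom (or even to be additive when $\mathcal{T},\mathcal{S}$ are nonlinear, as with $\mathcal{H}_n^\lambda$). Your polarization argument for linear, hyper self-adjoint $\mathcal{T},\mathcal{S}$ is correct and supplies more than the paper does; the observation that $\|\cdot\|_{\ell^2(\mathbf{w})}$ is a genuine norm on $\mathbb{P}_n(\mathbb{S}^d)$, needed to pass from $\|\mathcal{U}f\|_{\ell^2(\mathbf{w})}=0$ to $\mathcal{U}f\equiv 0$, is likewise a step the paper's proof of Theorem~\ref{thm:11} leaves implicit.

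One small correction to your list of examples: filtered and generalized hyperinterpolation, although linear and hyper self-adjoint, do not in general satisfy the Pythagorean identity (for $\mathcal{F}_{n,N}$ it would force $h_\ell^2=h_\ell$; for $\mathfrak{G}\mathfrak{L}_n$ it would force $a_{n\ell}^2=a_{n\ell}$), so they need not belong to a hyper semigroup in the sense of Definition~\ref{hypersemigroup}. This does not damage your argument --- it only shrinks the pool of linear examples --- and your larger point stands: the corollary as stated, for arbitrary $\mathcal{T},\mathcal{S}\in\mathfrak{B}(\mathbb{S}^d)$, does not follow from Theorem~\ref{thm:11} by the route the paper implicitly suggests, and your treatment is more careful than the paper's on this point.
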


\section{Operations on hyper projection operators}\label{sec4}
\subsection{The product of hyper projection operators}
In a Hilbert space, we often decompose normal operators into projection operators, which is very important in the spectral theorem \cite{rudin2011functional}. Thus, we will inevitably encounter the operations on projection operators.
Similarly, we first discuss products of hyper projection operators:
\begin{theorem}\label{thm:product-projection}
For two hyper projection operators $\mathcal{T}_1$ and $\mathcal{T}_2$, the product $\mathcal{T}_1\mathcal{T}_2$ is also a hyper projection operators if and only if $\mathcal{T}_1$ and $\mathcal{T}_2$ are commutative.
\end{theorem}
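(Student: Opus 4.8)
The plan is to prove the two implications separately, in both cases working only from Definition~\ref{def:hyper-projection} together with the hyper self-adjointness relation $\langle \mathcal{T}f,g\rangle_N=\langle f,\mathcal{T}g\rangle_N$ for all $f,g\in\mathcal{C}(\mathbb{S}^d)$ and the idempotency $\mathcal{T}_i^2=\mathcal{T}_i$, plus—for one direction—the degree-$2n$ exactness of the quadrature.

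For the ``if'' direction, suppose $\mathcal{T}_1\mathcal{T}_2=\mathcal{T}_2\mathcal{T}_1$. Idempotency is pure algebra: $(\mathcal{T}_1\mathcal{T}_2)^2=\mathcal{T}_1(\mathcal{T}_2\mathcal{T}_1)\mathcal{T}_2=\mathcal{T}_1(\mathcal{T}_1\mathcal{T}_2)\mathcal{T}_2=\mathcal{T}_1^2\mathcal{T}_2^2=\mathcal{T}_1\mathcal{T}_2$. For hyper self-adjointness I would chain the defining identity twice and then use commutativity:
\[
\langle \mathcal{T}_1\mathcal{T}_2 f,g\rangle_N=\langle \mathcal{T}_2 f,\mathcal{T}_1 g\rangle_N=\langle f,\mathcal{T}_2\mathcal{T}_1 g\rangle_N=\langle f,\mathcal{T}_1\mathcal{T}_2 g\rangle_N .
\]
Hence $\mathcal{T}_1\mathcal{T}_2$ satisfies both conditions of Definition~\ref{def:hyper-projection} and is a hyper projection operator.

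For the ``only if'' direction, assume $\mathcal{T}_1\mathcal{T}_2$ is a hyper projection, so in particular it is hyper self-adjoint. Running the same chain of identities but using self-adjointness of $\mathcal{T}_1\mathcal{T}_2$ in place of the last equality gives, for all $f,g\in\mathcal{C}(\mathbb{S}^d)$,
\[
\langle f,\mathcal{T}_1\mathcal{T}_2 g\rangle_N=\langle \mathcal{T}_1\mathcal{T}_2 f,g\rangle_N=\langle \mathcal{T}_2 f,\mathcal{T}_1 g\rangle_N=\langle f,\mathcal{T}_2\mathcal{T}_1 g\rangle_N ,
\]
so that $\langle f,(\mathcal{T}_1\mathcal{T}_2-\mathcal{T}_2\mathcal{T}_1)g\rangle_N=0$ for every $f,g$. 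Here is the one genuinely non-routine point: because $\langle\cdot,\cdot\rangle_N$ is only a semi-inner product on $\mathcal{C}(\mathbb{S}^d)$ (Remark~\ref{re:1}), this does \emph{not} immediately force the bracketed term to vanish. Instead I would fix $g$, set $p:=(\mathcal{T}_1\mathcal{T}_2-\mathcal{T}_2\mathcal{T}_1)g$, observe that $p\in\mathbb{P}_n(\mathbb{S}^d)$ since both composites take values in $\mathbb{P}_n(\mathbb{S}^d)$, and choose $f=p\in\mathbb{P}_n(\mathbb{S}^d)\subset\mathcal{C}(\mathbb{S}^d)$. Then $p^2\in\mathbb{P}_{2n}(\mathbb{S}^d)$, so exactness of degree $2n$ yields $\|p\|_{\ell^{2}(\mathbf{w})}^2=\langle p,p\rangle_N=\int_{\mathbb{S}^d}p^2\,\mathrm{d}\omega_d=\|p\|_{L_2}^2=0$, whence $p\equiv 0$. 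Since $g$ was arbitrary, $\mathcal{T}_1\mathcal{T}_2=\mathcal{T}_2\mathcal{T}_1$.

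I expect the main obstacle to be precisely that degeneracy of the discrete inner product: the classical Hilbert-space argument concludes ``$\mathcal{T}_1$ and $\mathcal{T}_2$ commute'' directly from an identity that holds for all test functions, but over $\mathcal{C}(\mathbb{S}^d)$ one must pass through the polynomial range of the operators and invoke the degree-$2n$ quadrature exactness—the same mechanism that makes $\langle\cdot,\cdot\rangle_N$ a genuine inner product on $\mathbb{P}_{2n}(\mathbb{S}^d)$, as in Sloan's original analysis. Everything else reduces to formal manipulation of the self-adjointness identity and of $\mathcal{T}_i^2=\mathcal{T}_i$.
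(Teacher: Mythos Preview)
Your proof follows essentially the same route as the paper's: both directions use the hyper self-adjointness chain and idempotency in exactly the same way. You are in fact more careful than the paper in the necessity direction: the paper simply concludes commutativity from $\langle f,\mathcal{T}_2\mathcal{T}_1 g\rangle_N=\langle f,\mathcal{T}_1\mathcal{T}_2 g\rangle_N$ without addressing the degeneracy of $\langle\cdot,\cdot\rangle_N$, whereas you close that gap by passing to the polynomial range and invoking degree-$2n$ quadrature exactness to get $\|p\|_{L_2}=0$.
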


\begin{proof}
Sufficiency. Given  $f,g \in \mathcal{C}(\mathbb{S}^d)$. If $\mathcal{T}_1$ and $\mathcal{T}_2$ are commutative, then 
\begin{equation*}
    \langle \mathcal{T}_1\mathcal{T}_2 f, g \rangle_N = \langle \mathcal{T}_2 f,  \mathcal{T}_1g \rangle_N =\langle  f,  \mathcal{T}_2\mathcal{T}_1g \rangle_N  = \langle  f,  \mathcal{T}_1\mathcal{T}_2g \rangle_N, 
\end{equation*}
which implies that $\mathcal{T}_1\mathcal{T}_2$ is hyper self-adjoint. It is clear that
\begin{equation*}
    (\mathcal{T}_1\mathcal{T}_2)^2=\mathcal{T}_1\mathcal{T}_2\mathcal{T}_1\mathcal{T}_2= (\mathcal{T}_1)^2 (\mathcal{T}_2)^2= \mathcal{T}_1\mathcal{T}_2.
\end{equation*}
Necessity. Since $\mathcal{T}_1\mathcal{T}_2$ is a hyper projection operator, then $\mathcal{T}_1\mathcal{T}_2 = (\mathcal{T}_1\mathcal{T}_2)^{\ast}$ and
\begin{equation*}
    \langle \mathcal{T}_1\mathcal{T}_2 f,g \rangle_N = \langle \mathcal{T}_2 f, \mathcal{T}_1g \rangle_N=\langle  f, \mathcal{T}_2\mathcal{T}_1g \rangle_N=\langle  f, \mathcal{T}_1\mathcal{T}_2g \rangle_N .
\end{equation*}
Therefore, $\mathcal{T}_1$ and $\mathcal{T}_2$ are commutative.
\end{proof}

\begin{example}
 For two hyperinterpolation operators $\mathcal{L}_n$ and $\mathcal{L}_m$ with $n \leq m$, 
 we have $\mathcal{L}_n\mathcal{L}_m=\mathcal{L}_m\mathcal{L}_n = \mathcal{L}_n$.   
\end{example}

\subsection{The sum of hyper projection operators}
\begin{theorem}\label{thm:sum}
For two hyper projection operators $\mathcal{T}_1$ and $\mathcal{T}_2$, the sum $\mathcal{T}_1 + \mathcal{T}_2$ is also a hyper projection operator if and only if
$\mathcal{T}_1 \mathcal{T}_2 =0$ (or $\mathcal{T}_2\mathcal{T}_1=0)$
\end{theorem}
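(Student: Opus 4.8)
The plan is to transplant the classical Hilbert-space criterion for when a sum of orthogonal projections is again a projection. Write $\mathcal{T} := \mathcal{T}_1 + \mathcal{T}_2$. The first observation is that $\mathcal{T}$ is automatically hyper self-adjoint: for any $f,g \in \mathcal{C}(\mathbb{S}^d)$,
\begin{equation*}
\langle \mathcal{T}f, g \rangle_N = \langle \mathcal{T}_1 f, g \rangle_N + \langle \mathcal{T}_2 f, g \rangle_N = \langle f, \mathcal{T}_1 g \rangle_N + \langle f, \mathcal{T}_2 g \rangle_N = \langle f, \mathcal{T}g \rangle_N ,
\end{equation*}
so by Definition \ref{def:hyper-projection} the only requirement that can hold or fail is idempotency. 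Since $\mathcal{T}^2 = \mathcal{T}_1^2 + \mathcal{T}_1\mathcal{T}_2 + \mathcal{T}_2\mathcal{T}_1 + \mathcal{T}_2^2 = \mathcal{T} + (\mathcal{T}_1\mathcal{T}_2 + \mathcal{T}_2\mathcal{T}_1)$ after using $\mathcal{T}_i^2 = \mathcal{T}_i$, the condition $\mathcal{T}^2 = \mathcal{T}$ is equivalent to the single identity $\mathcal{T}_1\mathcal{T}_2 + \mathcal{T}_2\mathcal{T}_1 = 0$; call it $(\star)$. Thus the whole theorem reduces to showing that $(\star)$ is equivalent to $\mathcal{T}_1\mathcal{T}_2 = 0$.

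For \emph{sufficiency}, assume $\mathcal{T}_1\mathcal{T}_2 = 0$. Taking hyper adjoints and using $(\mathcal{T}_1\mathcal{T}_2)^{\ast} = \mathcal{T}_2^{\ast}\mathcal{T}_1^{\ast} = \mathcal{T}_2\mathcal{T}_1$, one gets $\langle f, \mathcal{T}_2\mathcal{T}_1 g \rangle_N = \langle \mathcal{T}_1\mathcal{T}_2 f, g\rangle_N = 0$ for all $f,g$; choosing $f = \mathcal{T}_2\mathcal{T}_1 g \in \mathbb{P}_n(\mathbb{S}^d)$ gives $\|\mathcal{T}_2\mathcal{T}_1 g\|_{\ell^2(\mathbf{w})} = 0$ and hence $\mathcal{T}_2\mathcal{T}_1 g = 0$ (a polynomial of degree $\le n$ that is $\ell^2(\mathbf{w})$-null vanishes, since its square lies in $\mathbb{P}_{2n}(\mathbb{S}^d)$ where the quadrature is exact). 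So $(\star)$ holds, $\mathcal{T}^2 = \mathcal{T}$, and combined with the automatic hyper self-adjointness, $\mathcal{T}_1 + \mathcal{T}_2$ is a hyper projection operator. The symmetric hypothesis $\mathcal{T}_2\mathcal{T}_1 = 0$ is treated identically, which simultaneously records that the two conditions in the statement are equivalent.

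For \emph{necessity}, assume $\mathcal{T}_1 + \mathcal{T}_2$ is a hyper projection, so $(\star)$ holds. The main obstacle is promoting the anticommutator relation $(\star)$ to the stronger statement that each product vanishes; I would do this by sandwiching with $\mathcal{T}_1$. Multiplying $(\star)$ on the left by $\mathcal{T}_1$ and using $\mathcal{T}_1^2 = \mathcal{T}_1$ gives $\mathcal{T}_1\mathcal{T}_2 = -\mathcal{T}_1\mathcal{T}_2\mathcal{T}_1$; multiplying $(\star)$ on the right by $\mathcal{T}_1$ gives $\mathcal{T}_2\mathcal{T}_1 = -\mathcal{T}_1\mathcal{T}_2\mathcal{T}_1$; hence $\mathcal{T}_1\mathcal{T}_2 = \mathcal{T}_2\mathcal{T}_1$, which together with $(\star)$ forces $2\mathcal{T}_1\mathcal{T}_2 = 0$, i.e. $\mathcal{T}_1\mathcal{T}_2 = 0$ (and then $\mathcal{T}_2\mathcal{T}_1 = 0$). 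The only point at which the semi-inner-product setting, rather than a genuine inner product, intrudes is the step in the sufficiency part that reads an operator identity off a scalar one; since every operator involved maps into $\mathbb{P}_n(\mathbb{S}^d)$, where $\langle\cdot,\cdot\rangle_N$ is exact and hence non-degenerate, this causes no difficulty.
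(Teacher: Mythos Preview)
Your proof is correct and follows essentially the same route as the paper: reduce idempotency of $\mathcal{T}_1+\mathcal{T}_2$ to the anticommutator identity $\mathcal{T}_1\mathcal{T}_2+\mathcal{T}_2\mathcal{T}_1=0$, then sandwich with $\mathcal{T}_1$ on the left and on the right to obtain $\mathcal{T}_1\mathcal{T}_2=\mathcal{T}_2\mathcal{T}_1$ and conclude. If anything, your write-up is slightly more complete than the paper's: you explicitly verify that the sum is hyper self-adjoint, you justify why $\mathcal{T}_1\mathcal{T}_2=0$ alone forces $\mathcal{T}_2\mathcal{T}_1=0$ via adjoints and the non-degeneracy of $\langle\cdot,\cdot\rangle_N$ on $\mathbb{P}_n(\mathbb{S}^d)$, and you flag where the semi-inner-product subtlety enters.
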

\begin{proof}
Necessity. If $\mathcal{T}_1 + \mathcal{T}_2$ is a hyper projection operator, then
\begin{equation*}
    \begin{aligned}
        \mathcal{T}_1 + \mathcal{T}_2 &= (\mathcal{T}_1 + \mathcal{T}_2)^2 = (\mathcal{T}_1 )^2 + \mathcal{T}_1 \mathcal{T}_2 + \mathcal{T}_2 \mathcal{T}_1 + ( \mathcal{T}_2)^2 , \\
        &=\mathcal{T}_1 + \mathcal{T}_1 \mathcal{T}_2 + \mathcal{T}_2 \mathcal{T}_1  +\mathcal{T}_2,
    \end{aligned}
\end{equation*}
which implies that $\mathcal{T}_1 \mathcal{T}_2 + \mathcal{T}_2 \mathcal{T}_1  =0$. By direct computation, we have
\begin{equation*}
\left\{
    \begin{array}{c}
         \mathcal{T}_1\mathcal{T}_1 \mathcal{T}_2 + \mathcal{T}_1\mathcal{T}_2 \mathcal{T}_1  =0,  \\
         \mathcal{T}_1 \mathcal{T}_2\mathcal{T}_1 + \mathcal{T}_2 \mathcal{T}_1\mathcal{T}_1  =0, 
    \end{array}
\right.
\end{equation*}
which means that
\begin{equation*}
    \mathcal{T}_1 \mathcal{T}_2 = - \mathcal{T}_1\mathcal{T}_2 \mathcal{T}_1 = \mathcal{T}_2 \mathcal{T}_1.
\end{equation*}
Hence, $\mathcal{T}_1\mathcal{T}_2 = \mathcal{T}_2 \mathcal{T}_1=0$. 

Sufficiency. If $\mathcal{T}_1\mathcal{T}_2 = \mathcal{T}_2 \mathcal{T}_1=0$, then 
\begin{equation*}
    \mathcal{T}_1 + \mathcal{T}_2 = (\mathcal{T}_1 + \mathcal{T}_2)^2 = (\mathcal{T}_1 )^2 + \mathcal{T}_1 \mathcal{T}_2 + \mathcal{T}_2 \mathcal{T}_1 + ( \mathcal{T}_2)^2 
    = \mathcal{T}_1 + \mathcal{T}_2.
\end{equation*}
Thus, we have completed the proof.
\end{proof}

\begin{example}
Let $f \in \mathcal{C}(\mathbb{S}^d)$. Considering the following two hyper projection operators    
\begin{equation*}
    \mathcal{L}_{n}f = \sum_{\ell=0}^{n} \sum_{k=1}^{Z(d,\ell)} \left \langle f, Y^{(d)}_{\ell k} \right \rangle_N Y^{(d)}_{\ell k} \quad \text{and} \quad \mathcal{L}_{m-n}^{'}f = \sum_{\ell=n+1}^{m} \sum_{k=1}^{Z(d,\ell)} \left \langle f, Y^{(d)}_{\ell k} \right \rangle_N Y^{(d)}_{\ell k}, \quad (n<m),
\end{equation*}
then we have
\begin{equation*}
    \mathcal{L}_{n} + \mathcal{L}_{m-n}^{'}= \sum_{\ell=0}^{m} \sum_{k=1}^{Z(d,\ell)} \left \langle f, Y^{(d)}_{\ell k} \right \rangle_N Y^{(d)}_{\ell k}.
\end{equation*}
\end{example}

\subsection{The difference of hyper projection operators}
\begin{definition}\label{def:subprojection}
For two hyper projection operators $\mathcal{T}_1$ and $\mathcal{T}_2$, we call $\mathcal{T}_1$ is a \emph{suboperator} of $\mathcal{T}_2$, if the image $M_1$ of $\mathcal{T}_1$ is contained in the image $M_2$ of $\mathcal{T}_2$.
\end{definition}

\begin{lemma}\label{lem:suboperator}
For two hyper projection operators $\mathcal{T}_1$ and $\mathcal{T}_2$, $\mathcal{T}_1$ is a suboperator of $\mathcal{T}_2$ if and only if one of the two conditions is satisfied:
\begin{enumerate}
    \item[(i)] $\mathcal{T}_1 \mathcal{T}_2 = \mathcal{T}_2 \mathcal{T}_1  =\mathcal{T}_1$;
    \item[(ii)] Given $f \in \mathcal{C}(\mathbb{S}^d)$, $\|\mathcal{T}_1 f\|_{\ell^2(\mathbf{w})} \leq \|\mathcal{T}_2 f\|_{\ell^2(\mathbf{w})}$ .
\end{enumerate}
\end{lemma}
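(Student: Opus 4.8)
The plan is to prove the equivalence of suboperator-ship with conditions (i) and (ii) via a cycle of implications, say (suboperator) $\Rightarrow$ (i) $\Rightarrow$ (ii) $\Rightarrow$ (suboperator), exploiting throughout that both $\mathcal{T}_1$ and $\mathcal{T}_2$ are hyper projections (hence hyper self-adjoint and idempotent) and that, by Theorem \ref{prop:bounded-self-adjoint} and Lemma \ref{lem:norm1}, each behaves like an orthogonal projection with respect to the discrete semi-inner product $\langle\cdot,\cdot\rangle_N$.

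\textbf{Suboperator $\Rightarrow$ (i).} Suppose $M_1 = \mathrm{Im}(\mathcal{T}_1) \subset M_2 = \mathrm{Im}(\mathcal{T}_2)$. For any $f$, $\mathcal{T}_1 f \in M_1 \subset M_2$, and since $\mathcal{T}_2$ fixes its own image (idempotency: $\mathcal{T}_2$ restricted to $M_2$ is the identity), $\mathcal{T}_2 \mathcal{T}_1 f = \mathcal{T}_1 f$, i.e. $\mathcal{T}_2\mathcal{T}_1 = \mathcal{T}_1$. For the other identity, apply hyper self-adjointness: $\langle \mathcal{T}_1\mathcal{T}_2 f, g\rangle_N = \langle \mathcal{T}_2 f, \mathcal{T}_1 g\rangle_N = \langle f, \mathcal{T}_2\mathcal{T}_1 g\rangle_N = \langle f, \mathcal{T}_1 g\rangle_N = \langle \mathcal{T}_1 f, g\rangle_N$ for all $f,g$, and since $\langle\cdot,\cdot\rangle_N$ separates points of $\mathbb{P}_n(\mathbb{S}^d)$ (quadrature exactness $2n$ makes it a genuine inner product on polynomials), we conclude $\mathcal{T}_1\mathcal{T}_2 = \mathcal{T}_1$.

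\textbf{(i) $\Rightarrow$ (ii).} Given $f$, write $\mathcal{T}_1 f = \mathcal{T}_1\mathcal{T}_2 f = \mathcal{T}_1(\mathcal{T}_2 f)$. By Lemma \ref{lem:norm1}, $\|\mathcal{T}_1\|_{\ell^2(\mathbf{w})} = 1$, so $\|\mathcal{T}_1 f\|_{\ell^2(\mathbf{w})} = \|\mathcal{T}_1(\mathcal{T}_2 f)\|_{\ell^2(\mathbf{w})} \leq \|\mathcal{T}_2 f\|_{\ell^2(\mathbf{w})}$.

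\textbf{(ii) $\Rightarrow$ suboperator.} This is the delicate direction and the main obstacle, precisely because $\|\cdot\|_{\ell^2(\mathbf{w})}$ is only a semi-norm: two continuous functions agreeing at all quadrature nodes are indistinguishable. The idea is to take $f \in M_1$, so $\mathcal{T}_1 f = f$, and show $\mathcal{T}_2 f = f$. From (ii), $\|f\|_{\ell^2(\mathbf{w})} = \|\mathcal{T}_1 f\|_{\ell^2(\mathbf{w})} \leq \|\mathcal{T}_2 f\|_{\ell^2(\mathbf{w})} \leq \|f\|_{\ell^2(\mathbf{w})}$, the last step by Proposition \ref{prop:1}(ii) (or Lemma \ref{lem:norm1}), so $\|\mathcal{T}_2 f\|_{\ell^2(\mathbf{w})} = \|f\|_{\ell^2(\mathbf{w})}$. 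Applying the Pythagorean identity of the hyper semigroup to $\mathcal{T}_2$ gives $\langle \mathcal{T}_2 f - f, \mathcal{T}_2 f - f\rangle_N = \langle f,f\rangle_N - \langle \mathcal{T}_2 f, \mathcal{T}_2 f\rangle_N = 0$, hence $\|\mathcal{T}_2 f - f\|_{\ell^2(\mathbf{w})} = 0$. Now $\mathcal{T}_2 f$ and $f$ are both in $\mathbb{P}_n(\mathbb{S}^d)$ — here we use that $f \in M_1 \subset \mathbb{P}_n(\mathbb{S}^d)$ — and on the polynomial space $\langle\cdot,\cdot\rangle_N$ is a true inner product by quadrature exactness, so $\mathcal{T}_2 f = f$, i.e. $f \in M_2$. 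This yields $M_1 \subset M_2$, completing the cycle. I expect the one subtlety worth flagging in the writeup is that condition (ii) must be read as holding for all $f \in \mathcal{C}(\mathbb{S}^d)$, but we only need to test it on the (finite-dimensional, polynomial) image $M_1$ to recover the inclusion, and it is exactly on that polynomial subspace that the semi-norm upgrades to a norm.
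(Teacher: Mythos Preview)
Your proof is correct and, for the forward steps (suboperator $\Rightarrow$ (i) $\Rightarrow$ (ii)), essentially matches the paper's necessity argument: the paper likewise gets $\mathcal{T}_2\mathcal{T}_1=\mathcal{T}_1$ from $M_1\subset M_2$, transfers this to $\mathcal{T}_1\mathcal{T}_2=\mathcal{T}_1$ via hyper self-adjointness, and then derives the norm inequality using $\mathcal{T}_1=\mathcal{T}_1\mathcal{T}_2$ together with Lemma~\ref{lem:norm1}.

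The genuine difference is in the direction (ii) $\Rightarrow$ suboperator. The paper argues by contradiction: if some $\psi_0\in M_1\setminus M_2$ existed, its ``hyper orthogonal projection'' $\hat\psi_0$ onto $M_2$ would satisfy $\|\mathcal{T}_2\psi_0\|_{\ell^2(\mathbf w)}=\|\hat\psi_0\|_{\ell^2(\mathbf w)}<\|\psi_0\|_{\ell^2(\mathbf w)}=\|\mathcal{T}_1\psi_0\|_{\ell^2(\mathbf w)}$, violating (ii). Your argument is direct: for $f\in M_1$ you sandwich $\|\mathcal{T}_2 f\|_{\ell^2(\mathbf w)}=\|f\|_{\ell^2(\mathbf w)}$, feed this into the Pythagorean identity to get $\|\mathcal{T}_2 f-f\|_{\ell^2(\mathbf w)}=0$, and then use that $\mathcal{T}_2 f-f\in\mathbb{P}_n(\mathbb{S}^d)$, where quadrature exactness $2n$ upgrades the semi-norm to a genuine norm, forcing $\mathcal{T}_2 f=f$. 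Your route is arguably cleaner: it makes explicit exactly where the semi-norm issue is resolved (on the polynomial image), whereas the paper's strict inequality $\|\hat\psi_0\|_{\ell^2(\mathbf w)}<\|\psi_0\|_{\ell^2(\mathbf w)}$ tacitly relies on the same fact but leaves it unstated. Structuring the proof as a cycle also makes the three-way equivalence of suboperator, (i), and (ii) more transparent than the paper's necessity/sufficiency split.
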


\begin{proof}
Necessity. For $f \in \mathcal{C}(\mathbb{S}^d)$, since $\mathcal{T}_1 f \in M_1$, where $M_1$ is the image of $\mathcal{T}_1$, we have $\mathcal{T}_2 \mathcal{T}_1 f = \mathcal{T}_1 f$. Thus, we obtain
\begin{equation*}
    \mathcal{T}_2 \mathcal{T}_1  = \mathcal{T}_1 .
\end{equation*}
Furthermore, we obtain
\begin{equation*}
    \langle  \mathcal{T}_2 \mathcal{T}_1 f, g \rangle_N = \langle  f, \mathcal{T}_1 \mathcal{T}_2g \rangle_N =\langle  \mathcal{T}_1 f, g \rangle_N = \langle f,  \mathcal{T}_1g \rangle_N ,
\end{equation*}
which implies that $\mathcal{T}_2 \mathcal{T}_1 =\mathcal{T}_1 \mathcal{T}_2 =  \mathcal{T}_1$.  By direct computation and Lemma \ref{lem:norm1}, we have
\begin{equation*}
    \| \mathcal{T}_1 f\|_{\ell^2(\mathbf{w})} =   \| \mathcal{T}_1 \mathcal{T}_2 f\|_{\ell^2(\mathbf{w})} \leq \| \mathcal{T}_1 \|_{\ell^2(\mathbf{w})}   \| \mathcal{T}_2 f\|_{\ell^2(\mathbf{w})}  = \| \mathcal{T}_2 f\|_{\ell^2(\mathbf{w})}.
\end{equation*}

Sufficiency. If $\mathcal{T}_1$ is not a suboperator of $\mathcal{T}_2$, then there exist an element $\psi_0 \in M_1$ but $\psi_0 \notin M_2$, where $M_1,M_2$ are the images of $\mathcal{T}_1$ and $\mathcal{T}_2$, respectively. Let $\hat{\psi}_0$ be the hyper orthogonal projection of $\psi_0$ in $M_2$. 
Then $\| \hat{\psi}_0\|_{\ell^2(\mathbf{w})} < \| {\psi}_0\|_{\ell^2(\mathbf{w})}$ and $\mathcal{T}_2 \psi_0 = \hat{\psi}_0$. With the aid of Proposition \ref{prop:1} (ii), we have 
\begin{equation*}
    \|\mathcal{T}_2 \psi_0 \|_{\ell^2(\mathbf{w})} = \|\hat{\psi}_0 \|_{\ell^2(\mathbf{w})} < \|{\psi}_0 \|_{\ell^2(\mathbf{w})} = \|\mathcal{T}_1 \psi_0 \|_{\ell^2(\mathbf{w})},
\end{equation*}
which contradicts the assumption. Therefore, $\mathcal{T}_1$ is a suboperator of $\mathcal{T}_2$.
\end{proof}

\begin{remark}
Since filtered hyperinterpolation operator $\mathcal{F}_{n,N}$, hard thresholding hyperinterpolation operator $\mathcal{H}_{n}^{\lambda}$ and Lasso hyperinterpolation operator $\mathcal{L}_n^{\lambda}$
are not hyper projection operators, they fail to be suboperators of hyperinterpolation operators even if satisfing the conditions (i) or (ii) in Lemma \ref{lem:suboperator}. 
\end{remark}

Next, we focus on the difference between two hyper projection operators:
\begin{theorem}\label{thm:difference}
For two hyper projection operators $\mathcal{T}_1$ and $\mathcal{T}_2$, the difference $\mathcal{T}_2 - \mathcal{T}_1$ is a hyper projection operator if and only if $\mathcal{T}_1$ is a suboperator of $\mathcal{T}_2$.
\end{theorem}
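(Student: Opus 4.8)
The plan is to establish both directions by manipulating the two defining relations of a hyper projection operator — hyper self-adjointness and idempotency — together with the operational characterization of ``suboperator'' furnished by Lemma \ref{lem:suboperator}(i), namely that $\mathcal{T}_1$ is a suboperator of $\mathcal{T}_2$ precisely when $\mathcal{T}_1\mathcal{T}_2 = \mathcal{T}_2\mathcal{T}_1 = \mathcal{T}_1$.

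For the sufficiency direction, I would start from $\mathcal{T}_1\mathcal{T}_2 = \mathcal{T}_2\mathcal{T}_1 = \mathcal{T}_1$ (Lemma \ref{lem:suboperator}(i)) and check the two clauses of Definition \ref{def:hyper-projection} for $\mathcal{T}_2 - \mathcal{T}_1$. Idempotency is a one-line expansion: $(\mathcal{T}_2 - \mathcal{T}_1)^2 = \mathcal{T}_2^2 - \mathcal{T}_2\mathcal{T}_1 - \mathcal{T}_1\mathcal{T}_2 + \mathcal{T}_1^2 = \mathcal{T}_2 - \mathcal{T}_1 - \mathcal{T}_1 + \mathcal{T}_1 = \mathcal{T}_2 - \mathcal{T}_1$. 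Hyper self-adjointness passes through by linearity of $\langle\cdot,\cdot\rangle_N$: for $f,g \in \mathcal{C}(\mathbb{S}^d)$, $\langle (\mathcal{T}_2 - \mathcal{T}_1)f, g\rangle_N = \langle \mathcal{T}_2 f, g\rangle_N - \langle \mathcal{T}_1 f, g\rangle_N = \langle f, \mathcal{T}_2 g\rangle_N - \langle f, \mathcal{T}_1 g\rangle_N = \langle f, (\mathcal{T}_2 - \mathcal{T}_1)g\rangle_N$, using that $\mathcal{T}_1$ and $\mathcal{T}_2$ are individually hyper self-adjoint.

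For the necessity direction, suppose $\mathcal{T}_2 - \mathcal{T}_1$ is a hyper projection operator. Expanding its idempotency $(\mathcal{T}_2 - \mathcal{T}_1)^2 = \mathcal{T}_2 - \mathcal{T}_1$ and using $\mathcal{T}_i^2 = \mathcal{T}_i$ yields the relation $\mathcal{T}_1\mathcal{T}_2 + \mathcal{T}_2\mathcal{T}_1 = 2\mathcal{T}_1$. The heart of the argument is then the standard ``sandwich'' manoeuvre: composing this identity on the left with $\mathcal{T}_1$ and using $\mathcal{T}_1^2 = \mathcal{T}_1$ gives $\mathcal{T}_1\mathcal{T}_2\mathcal{T}_1 + \mathcal{T}_1\mathcal{T}_2 = 2\mathcal{T}_1$, while composing on the right with $\mathcal{T}_1$ gives $\mathcal{T}_2\mathcal{T}_1 + \mathcal{T}_1\mathcal{T}_2\mathcal{T}_1 = 2\mathcal{T}_1$; subtracting these shows $\mathcal{T}_1\mathcal{T}_2 = \mathcal{T}_2\mathcal{T}_1$, and substituting back into $\mathcal{T}_1\mathcal{T}_2 + \mathcal{T}_2\mathcal{T}_1 = 2\mathcal{T}_1$ forces $\mathcal{T}_1\mathcal{T}_2 = \mathcal{T}_2\mathcal{T}_1 = \mathcal{T}_1$. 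By Lemma \ref{lem:suboperator}(i), $\mathcal{T}_1$ is a suboperator of $\mathcal{T}_2$, which completes the proof.

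I do not expect a genuine obstacle: this is the discrete semi-norm analogue of the classical Hilbert-space theorem on differences of orthogonal projections, and the only non-mechanical step is the sandwich manoeuvre above, which is robust. The one point worth a sentence of care is that, although $\|\cdot\|_{\ell^2(\mathbf{w})}$ is merely a semi-norm (Remark \ref{re:1}), every relation used — idempotency, hyper self-adjointness, and the derived identities $\mathcal{T}_1\mathcal{T}_2 = \mathcal{T}_2\mathcal{T}_1 = \mathcal{T}_1$ — is a genuine operator identity on $\mathcal{C}(\mathbb{S}^d)$, so the degeneracy of the discrete inner product never enters. As a sanity check, the necessity direction could instead be routed through Proposition \ref{prop:1}(ii) and the Pythagorean identity to derive $\|\mathcal{T}_1 f\|_{\ell^2(\mathbf{w})} \le \|\mathcal{T}_2 f\|_{\ell^2(\mathbf{w})}$ and then invoke Lemma \ref{lem:suboperator}(ii), but the algebraic route is shorter and self-contained.
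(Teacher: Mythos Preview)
Your proof is correct. The sufficiency direction is essentially identical to the paper's: both expand $(\mathcal{T}_2-\mathcal{T}_1)^2$ using $\mathcal{T}_1\mathcal{T}_2=\mathcal{T}_2\mathcal{T}_1=\mathcal{T}_1$ from Lemma~\ref{lem:suboperator}(i), and both check hyper self-adjointness by linearity.

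For necessity the routes diverge. The paper first argues commutativity $\mathcal{T}_1\mathcal{T}_2=\mathcal{T}_2\mathcal{T}_1$ via a hyper self-adjointness computation, namely $\langle \mathcal{T}_2(\mathcal{T}_2-\mathcal{T}_1)f,g\rangle_N=\langle f,(\mathcal{T}_2-\mathcal{T}_1)\mathcal{T}_2 g\rangle_N$, and only then invokes idempotency to conclude $\mathcal{T}_1\mathcal{T}_2=\mathcal{T}_1$. You instead derive everything from idempotency alone: expand $(\mathcal{T}_2-\mathcal{T}_1)^2=\mathcal{T}_2-\mathcal{T}_1$ to get $\mathcal{T}_1\mathcal{T}_2+\mathcal{T}_2\mathcal{T}_1=2\mathcal{T}_1$, then multiply on the left and on the right by $\mathcal{T}_1$ and subtract. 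Your sandwich manoeuvre is the classical Hilbert-space argument and is more self-contained; in particular it shows that self-adjointness of $\mathcal{T}_2-\mathcal{T}_1$ is not actually needed for the necessity direction. This is a genuine gain, since the paper's self-adjointness display is in fact automatic once $\mathcal{T}_1$ and $\mathcal{T}_2$ are individually hyper self-adjoint (both sides reduce to $\langle f,(\mathcal{T}_2-\mathcal{T}_1\mathcal{T}_2)g\rangle_N$), so it does not by itself force commutativity---your algebraic route is what really closes the argument.
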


\begin{proof}
Necessity. If $\mathcal{T}_2 - \mathcal{T}_1$ is a hyper projection operator, then we have
\begin{equation*}
\langle \mathcal{T}_2 (\mathcal{T}_2 - \mathcal{T}_1) f, g \rangle_N = \langle (\mathcal{T}_2  - \mathcal{T}_2\mathcal{T}_1) f, g \rangle_N = \langle  f, (\mathcal{T}_2 - \mathcal{T}_1)\mathcal{T}_2g \rangle_N = \langle  f, (\mathcal{T}_2 - \mathcal{T}_1\mathcal{T}_2)g \rangle_N.
\end{equation*}
Thus we can deduce that $\mathcal{T}_1\mathcal{T}_2 = \mathcal{T}_2\mathcal{T}_1$. Besides, the following holds:
\begin{equation}\label{equ:difference1}
(\mathcal{T}_2 - \mathcal{T}_1)^2 = \mathcal{T}_2^2 - \mathcal{T}_2\mathcal{T}_1 - \mathcal{T}_1 \mathcal{T}_2 + \mathcal{T}_1^2 = \mathcal{T}_2 - \mathcal{T}_1,
\end{equation}
which implies that $\mathcal{T}_2\mathcal{T}_1=\mathcal{T}_1 \mathcal{T}_2= \mathcal{T}_1$.

Sufficiency. By Lemma \ref{lem:suboperator} (i), the equation \eqref{equ:difference1} holds. On the flip side, we have
\begin{equation*}
    \langle (\mathcal{T}_2 - \mathcal{T}_1) f, g \rangle_N = \langle \mathcal{T}_2  f, g \rangle_N - \langle  \mathcal{T}_1 f, g \rangle_N =  \langle   f, \mathcal{T}_2 g \rangle_N - \langle   f, \mathcal{T}_1g \rangle_N = \langle  f, (\mathcal{T}_2 - \mathcal{T}_1)g \rangle_N.
\end{equation*}
Thus, we have completed the proof.
\end{proof}

\begin{example}
For two hyperinterpolation operators $\mathcal{L}_n$ and $\mathcal{L}_m$ with $n \leq m$, $\mathcal{L}_n$ is a suboperator
of $\mathcal{L}_m$. Thus, the difference $\mathcal{L}_m - \mathcal{L}_n$ is also a hyper projection operator.  Specifically, we have
\begin{equation*}
  ( \mathcal{L}_m - \mathcal{L}_n)f= \sum_{\ell=n+1}^{m}\sum_{k=1}^{Z(d,\ell)} \left \langle {f, Y_{\ell k}^{(d)}} \right \rangle_N   Y_{\ell k}^{(d)}.
\end{equation*}
\end{example}

\section{Hyper ideals and hyper homomorphisms}\label{sec:ideals_homomorphism}
In this section, we mainly investigate the hyper ideals and hyper homomorphisms of hyperinterpolation operators. From the above, we know that 
both hard thresholding hyperinterpolation operator $\mathcal{H}_{n}^{\lambda}$ and hyperinterpolation operator $\mathcal{L}_n$ are
elements of the hyper semigroup $\mathfrak{B}(\mathbb{S}^d)$. Now we define three sets that can be useful in the latter, 
\begin{equation*}
    \mathfrak{I}(\mathbb{S}^d):=\{\mathcal{T}:  \mathcal{T} = \mathcal{L}_n \text{ or }\mathcal{T} = \mathcal{H}_n^{\lambda},  \mathcal{T} \in \mathfrak{B}(\mathbb{S}^d)  \}
\end{equation*}
and
\begin{equation*}
    \mathfrak{H}(\mathbb{S}^d):=\{\mathcal{T}:  \mathcal{T} = \mathcal{H}_n^{\lambda},  \mathcal{T} \in \mathfrak{B}(\mathbb{S}^d)  \},
\end{equation*}
and
\begin{equation*}
    \mathfrak{L}(\mathbb{S}^d):=\{\mathcal{T}:  \mathcal{T} = \mathcal{L}_n,  \mathcal{T} \in \mathfrak{B}(\mathbb{S}^d)  \}.
\end{equation*}
It is clear that $\mathfrak{I}(\mathbb{S}^d)$ and $\mathfrak{H}(\mathbb{S}^d)$ are commutative hyper semigroups, respectively. 

We mimic the definition of an \emph{ideal} \cite[p.16]{Grillet1995Semigroups}. %\cite[Definition 11.1, p.275]{rudin2011functional}.
\begin{definition}\label{def:ideal}
    A subset $\mathcal{S}$ of a commutative hyper semigroup $\mathcal{A}$ is said to be an \emph{hyper ideal} if $xy \in \mathcal{S}$ whenever $x \in \mathcal{A}$ and $y \in \mathcal{S}$.
\end{definition}
If $ \mathcal{S}\neq \mathcal{A}$, $\mathcal{S}$ is a \emph{proper} hyper ideal. A  proper hyper ideal $\mathcal{S}$ of a commutative hyper ideal $\mathcal{A}$ is \emph{prime} if $x,y \in \mathcal{A}$ such that $xy \in \mathcal{S}$, then either $x \in \mathcal{S}$ or $y \in \mathcal{S}$. \emph{Minimal} hyper ideals are proper hyper ideals that does not  contain any smaller proper ideals. 
Then we have the following important result.
\begin{theorem}\label{thm:ideal}
 The subset $\mathfrak{H}(\mathbb{S}^d)$ is a minimal prime hyper ideal of $\mathfrak{I}(\mathbb{S}^d)$.
\end{theorem}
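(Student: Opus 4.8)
The plan is to verify the three separate properties packed into the statement: that $\mathfrak{H}(\mathbb{S}^d)$ is (a) a hyper ideal of $\mathfrak{I}(\mathbb{S}^d)$, (b) prime, and (c) minimal. The technical backbone is the explicit composition rules for $\mathcal{L}_n$ and $\mathcal{H}_n^{\lambda}$ under the discrete inner product with quadrature exactness $2n$; I would first record, as a preliminary computation, the four identities $\mathcal{L}_n \circ \mathcal{L}_m = \mathcal{L}_{\min(n,m)}$, $\mathcal{H}_n^{\lambda} \circ \mathcal{H}_m^{\lambda} = \mathcal{H}_{\min(n,m)}^{\lambda}$, and $\mathcal{L}_n \circ \mathcal{H}_m^{\lambda} = \mathcal{H}_{\min(n,m)}^{\lambda} = \mathcal{H}_m^{\lambda}\circ \mathcal{L}_n$ (the last using $\mathcal{L}_n Y_{\ell k}^{(d)} = Y_{\ell k}^{(d)}$ for $\ell \le n$ together with $\langle Y_{\ell k}^{(d)}, Y_{\ell' k'}^{(d)}\rangle_N = \delta_{\ell\ell'}\delta_{kk'}$ when $\ell,\ell' \le n$, i.e. exactness $2n$). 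These are essentially the computations already displayed for $\mathfrak{G}\mathfrak{L}_n \circ \mathcal{L}_n$ in the proof of Theorem \ref{thm:GeneralizedHyperSelfAdjoint}, specialized and with the hard-thresholding nonlinearity commuting through because it acts coordinatewise on the coefficients.

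For (a): given $x \in \mathfrak{I}(\mathbb{S}^d)$ and $y \in \mathfrak{H}(\mathbb{S}^d)$, we have $y = \mathcal{H}_m^{\lambda}$ for some $m$, and $x$ is either some $\mathcal{L}_n$ or some $\mathcal{H}_n^{\lambda}$; in either case the composition rules above give $xy \in \{\mathcal{H}_{\min(n,m)}^{\lambda}\} \subset \mathfrak{H}(\mathbb{S}^d)$, so $\mathfrak{H}(\mathbb{S}^d)$ is a hyper ideal, and it is proper since $\mathcal{L}_n \notin \mathfrak{H}(\mathbb{S}^d)$ (a point worth a one-line justification: $\mathcal{L}_n$ is hyper self-adjoint and idempotent while $\mathcal{H}_m^{\lambda}$ is not hyper self-adjoint, so they cannot coincide as operators). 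For (b): suppose $x,y \in \mathfrak{I}(\mathbb{S}^d)$ with $xy \in \mathfrak{H}(\mathbb{S}^d)$. If neither $x$ nor $y$ were in $\mathfrak{H}(\mathbb{S}^d)$, then both would be hyperinterpolation operators, say $x = \mathcal{L}_n$, $y = \mathcal{L}_m$, whence $xy = \mathcal{L}_{\min(n,m)} \in \mathfrak{L}(\mathbb{S}^d)$, contradicting $xy \in \mathfrak{H}(\mathbb{S}^d)$ (again because a pure hyperinterpolation operator is not of hard-thresholding type). Hence $x \in \mathfrak{H}(\mathbb{S}^d)$ or $y \in \mathfrak{H}(\mathbb{S}^d)$, so the ideal is prime.

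For (c): I would argue that any nonempty hyper ideal $\mathcal{S} \subseteq \mathfrak{H}(\mathbb{S}^d)$ of $\mathfrak{I}(\mathbb{S}^d)$ must equal $\mathfrak{H}(\mathbb{S}^d)$. Take any $\mathcal{H}_k^{\lambda} \in \mathcal{S}$ and any target $\mathcal{H}_m^{\lambda} \in \mathfrak{H}(\mathbb{S}^d)$; the issue is producing $\mathcal{H}_m^{\lambda}$ as $x \cdot \mathcal{H}_k^{\lambda}$ with $x \in \mathfrak{I}(\mathbb{S}^d)$. When $m \le k$ this is immediate with $x = \mathcal{L}_m$ (or $x=\mathcal{H}_m^{\lambda}$), since the composite is $\mathcal{H}_{\min(m,k)}^{\lambda} = \mathcal{H}_m^{\lambda}$. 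The genuinely delicate direction is $m > k$: here I would use that the parameter $n$ in the definition $\mathcal{T}: \mathcal{C}(\mathbb{S}^d) \to \mathbb{P}_n(\mathbb{S}^d)$ is tied to the fixed quadrature rule of exactness $2n$, so within a single hyper semigroup $\mathfrak{B}(\mathbb{S}^d)$ all operators share the \emph{same} degree $n$ — that is, the family is really $\{\mathcal{L}_n, \mathcal{H}_n^{\lambda}\}$ for one fixed $n$ (as Remark \ref{rem:semigroup} and the composition-closure bullet indicate, composites reduce to $\mathcal{H}_{\min(m,n)}^{\lambda}$ within the \emph{same} quadrature-fixed setting), so that $\mathfrak{H}(\mathbb{S}^d) = \{\mathcal{H}_n^{\lambda}\}$ is already a singleton (up to the choice of $\lambda$); then minimality is trivial because a singleton ideal contains no proper sub-ideal. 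I would state this degree-fixing convention explicitly as the first line of the proof. The main obstacle is exactly this point — pinning down whether $\mathfrak{H}(\mathbb{S}^d)$ ranges over all degrees or is fixed by the ambient quadrature rule — and the cleanest resolution is to adopt the fixed-$n$ reading consistent with Definition \ref{hypersemigroup} and Remark \ref{rem:semigroup}, under which all three claims collapse to the short arguments above.
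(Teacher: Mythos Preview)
Your arguments for (a) and (b) are essentially the paper's: establish the composition identities $\mathcal{L}_m\mathcal{H}_n^{\lambda}=\mathcal{H}_{\min(m,n)}^{\lambda}$ and $\mathcal{H}_m^{\lambda}\mathcal{H}_n^{\lambda}=\mathcal{H}_{\min(m,n)}^{\lambda}$ (these are exactly equations \eqref{equ:prime_ideal_1}--\eqref{equ:prime_ideal_2}), then read off ideal-ness and primeness. Your contrapositive for primeness is fine and matches what the paper means by ``it is clear''.

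Part (c) has a genuine gap. Your resolution --- declaring $\mathfrak{H}(\mathbb{S}^d)$ a singleton by fixing the degree $n$ --- is not the paper's reading: the paper explicitly takes $\mathfrak{H}(\mathbb{S}^d)$ to contain $\mathcal{H}_1^{\lambda},\ldots,\mathcal{H}_n^{\lambda}$, and in its minimality step works with the proper subset $\mathcal{S}=\{\mathcal{H}_1^{\lambda},\ldots,\mathcal{H}_k^{\lambda}\}$ for $1\le k<n$. So the singleton escape is unavailable. More importantly, you are proving the wrong thing: you try to show $\mathfrak{H}(\mathbb{S}^d)$ is a minimal \emph{ideal} (no smaller nonempty ideal inside it), and you correctly sense trouble in the $m>k$ direction --- indeed $\mathcal{S}$ above \emph{is} a strictly smaller ideal, so $\mathfrak{H}(\mathbb{S}^d)$ is not minimal in that sense at all.

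What the paper actually proves is that $\mathfrak{H}(\mathbb{S}^d)$ is minimal \emph{among prime} ideals: any strictly smaller ideal $\mathcal{S}\subsetneq\mathfrak{H}(\mathbb{S}^d)$ fails primeness. The paper's witness is $\mathcal{H}_{k+1}^{\lambda}\cdot\mathcal{L}_k=\mathcal{H}_k^{\lambda}\in\mathcal{S}$ while neither factor lies in $\mathcal{S}$. That is the step you are missing; once you have the composition rules you've already recorded, this is a one-line verification, but it has to replace your singleton argument.
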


\begin{proof}
We first prove that $\mathfrak{H}(\mathbb{S}^d)$ is a proper hyper ideal. Let $\mathcal{H}_n^{\lambda} \in \mathfrak{H}(\mathbb{S}^d)$ and $\mathcal{H}_m^{\lambda},\mathcal{L}_m \in \mathfrak{I}(\mathbb{S}^d)$, where $m$ may be not equal to $n$. Then we have
\begin{equation}\label{equ:prime_ideal_1}
    \mathcal{L}_m \mathcal{H}_n^{\lambda} = \left\{\begin{array}{ll}
        \mathcal{H}_n^{\lambda}, & m\geq n, \\
        \mathcal{H}_m^{\lambda}, & m < n ,
    \end{array}  \right.
\end{equation}
and
\begin{equation}\label{equ:prime_ideal_2}
    \mathcal{H}_m^{\lambda} \mathcal{H}_n^{\lambda} = \left\{\begin{array}{ll}
        \mathcal{H}_n^{\lambda}, & m\geq n, \\
        \mathcal{H}_m^{\lambda}, & m < n .
    \end{array}  \right.
\end{equation}
Indeed, for $\mathcal{L}_m \mathcal{H}_n^{\lambda}$ and $f \in \mathcal{C}(\mathbb{S}^d)$, we obtain
\begin{equation*}
 \begin{aligned}
    \mathcal{L}_m \mathcal{H}_n^{\lambda} f & = \sum_{\ell=0}^{m}\sum_{k=1}^{Z(d,\ell)} \left \langle  \sum_{\ell'=0}^{n}\sum_{k'=1}^{Z(d,\ell')} \eta_{H} \left( \left \langle f, Y_{\ell' k'}^{(d)} \right \rangle_N, \lambda  \right)Y_{\ell' k'}^{(d)}   , Y_{\ell k}^{(d)}\right \rangle_N Y_{\ell k}^{(d)} \\
     &=\sum_{\ell=0}^{m}\sum_{k=1}^{Z(d,\ell)} \sum_{\ell'=0}^{n}\sum_{k'=1}^{Z(d,\ell')} \eta_{H} \left( \left \langle f, Y_{\ell' k'}^{(d)} \right \rangle_N, \lambda  \right) \left \langle Y_{\ell'k'}^{(d)} , Y_{\ell k}^{(d)} \right \rangle _N Y_{\ell k}^{(d)}.
 \end{aligned}
\end{equation*}
Similarly, one can prove that
\begin{equation*}
    \mathcal{H}_m^{\lambda} \mathcal{H}_n^{\lambda} f = \sum_{\ell=0}^{m}\sum_{k=1}^{Z(d,\ell)} \eta_H \left( \sum_{\ell'=0}^{n}\sum_{k'=1}^{Z(d,\ell')} \eta_H \left( \left \langle f, Y_{\ell'k'}^{(d)}\right \rangle_N, \lambda \right) \left \langle Y_{\ell k}^{(d)}, Y_{\ell' k'}^{(d)} \right \rangle_N, \lambda \right) Y_{\ell k}^{(d)}.
\end{equation*}

By equations \eqref{equ:prime_ideal_1} and \eqref{equ:prime_ideal_2}, it is clear that $\mathfrak{H}(\mathbb{S}^d)$ is prime. Now, we prove that $\mathfrak{H}(\mathbb{S}^d)$ is minimal. Suppose that $\mathcal{S}=\{\mathcal{H}_1^{\lambda},\ldots,\mathcal{H}_k^{\lambda}\}$ with $1\leq k <n$.  It is obvious that $\mathcal{S}$ is a hype ideal of $\mathfrak{J}(\mathbb{S}^d)$, but is not prime. Although
$\mathcal{H}_{k+1}^{\lambda}, \mathcal{L}_k  \in \mathfrak{J}(\mathbb{S}^d)$ and $\mathcal{H}_{k+1}^{\lambda} \mathcal{L}_k = \mathcal{H}_{k}^{\lambda} \in \mathcal{S}$, both $\mathcal{H}_{k+1}^{\lambda} $ and $\mathcal{L}_k $ are not elements of $\mathcal{S}$.

Thus, we have completed the proof.
\end{proof}

\begin{remark}
    The hyper semigroup $\mathfrak{J}(\mathbb{S}^d)$ does not have a maximal hyper semigroup that is not contained in any larger proper ideal. For example, $\mathfrak{H}(\mathbb{S}^d)$ is a proper hyper semigroup of $\mathfrak{J}(\mathbb{S}^d)$ but is not maximal. Since we can easily prove that the subset $\{\mathcal{H}_1^{\lambda}, \ldots, \mathcal{H}_n^{\lambda},\mathcal{L}_1,\ldots,\mathcal{L}_k\}$ with $1\leq k <n$ of $\mathfrak{J}(\mathbb{S}^d)$ is also a proper hyper semigroup.
\end{remark}

We now define the \emph{hyper homomorphism}. For background on homomorphisms in semigroup theory, see \cite[p.9]{Grillet1995Semigroups}.
\begin{definition}\label{def:hyper_homomorphism}
Suppose $\mathcal{A}$ is a hyper semigroup and $\phi$ is a linear operator on $\mathcal{A}$ which is not identically zero. If
\begin{equation*}
    \phi(xy)=\phi(x)\phi(y)
\end{equation*}
for all $x, y\in \mathcal{A}$, then $\phi$ is called a \emph{hyper homomorphism} on $\mathcal{A}$.
\end{definition}

\begin{theorem}
Hyperinterpolation operator $\mathcal{L}_n$ is a hyper homomorphism on the hyper semigroup $\mathfrak{I}(\mathbb{S}^d)$.
\end{theorem}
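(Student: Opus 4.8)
The plan is to verify directly that $\mathcal{L}_n$ satisfies the two requirements of Definition \ref{def:hyper_homomorphism}: linearity (together with being not identically zero) and the multiplicativity condition $\mathcal{L}_n(xy) = \mathcal{L}_n(x)\mathcal{L}_n(y)$ for all $x,y \in \mathfrak{I}(\mathbb{S}^d)$. Linearity of $\mathcal{L}_n$ on operators is immediate from the definition \eqref{equ:hyper} since the discrete inner product $\langle\cdot,\cdot\rangle_N$ is linear in its first argument; and $\mathcal{L}_n$ is plainly not identically zero (for instance $\mathcal{L}_n \mathcal{L}_n = \mathcal{L}_n \neq 0$). So the real content is the multiplicative identity, which must be checked on the four types of products available in $\mathfrak{I}(\mathbb{S}^d)$: $\mathcal{L}_m\mathcal{L}_p$, $\mathcal{L}_m\mathcal{H}_p^\lambda$, $\mathcal{H}_m^\lambda\mathcal{L}_p$, and $\mathcal{H}_m^\lambda\mathcal{H}_p^\lambda$.

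First I would record the composition rules already established or easily derived in the excerpt. For hyperinterpolation operators, $\mathcal{L}_m\mathcal{L}_p = \mathcal{L}_{\min(m,p)}$ (the Example after Theorem \ref{thm:product-projection}). For the mixed and hard-thresholding products, equations \eqref{equ:prime_ideal_1} and \eqref{equ:prime_ideal_2} give $\mathcal{L}_m\mathcal{H}_p^\lambda = \mathcal{H}_{\min(m,p)}^\lambda$ and $\mathcal{H}_m^\lambda\mathcal{H}_p^\lambda = \mathcal{H}_{\min(m,p)}^\lambda$; the remaining case $\mathcal{H}_m^\lambda\mathcal{L}_p$ is handled the same way (applying $\mathcal{L}_p$ first leaves coefficients of degree $\le p$ unchanged, then hard-thresholding at level $m$ gives $\mathcal{H}_{\min(m,p)}^\lambda$ when $m \le p$, and $\mathcal{H}_m^\lambda$ restricted appropriately when $m > p$). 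Next, since $\mathcal{L}_n$ acts on a hyperinterpolation-class operator by truncating its degree to $\min(\cdot,n)$ — concretely $\mathcal{L}_n\mathcal{L}_r = \mathcal{L}_{\min(n,r)}$ and $\mathcal{L}_n\mathcal{H}_r^\lambda = \mathcal{H}_{\min(n,r)}^\lambda$ by \eqref{equ:prime_ideal_1} — I would then check in each of the four cases that $\mathcal{L}_n$ applied to the product equals the product of $\mathcal{L}_n$ applied to each factor. For example, in the $\mathcal{L}_m\mathcal{H}_p^\lambda$ case: $\mathcal{L}_n(\mathcal{L}_m\mathcal{H}_p^\lambda) = \mathcal{L}_n\mathcal{H}_{\min(m,p)}^\lambda = \mathcal{H}_{\min(n,m,p)}^\lambda$, while $\mathcal{L}_n(\mathcal{L}_m)\cdot\mathcal{L}_n(\mathcal{H}_p^\lambda) = \mathcal{L}_{\min(n,m)}\mathcal{H}_{\min(n,p)}^\lambda = \mathcal{H}_{\min(n,m,p)}^\lambda$, using \eqref{equ:prime_ideal_1} again; the two agree. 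The other three cases reduce to the identity $\min(n,\min(m,p)) = \min(\min(n,m),\min(n,p))$, which is an elementary fact about the $\min$ operation.

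The main obstacle I anticipate is not any single hard computation but rather getting the degree-truncation bookkeeping exactly right in the mixed cases, particularly $\mathcal{H}_m^\lambda\mathcal{L}_p$ where the order of thresholding and truncation matters, and being careful that composing $\mathcal{L}_n$ with a hard-thresholding operator really does commute past the threshold in the way \eqref{equ:prime_ideal_1}--\eqref{equ:prime_ideal_2} assert (this rests on the quadrature exactness $2n$ making the $Y_{\ell k}^{(d)}$ discretely orthonormal, so that $\langle Y_{\ell' k'}^{(d)}, Y_{\ell k}^{(d)}\rangle_N = \delta_{\ell\ell'}\delta_{kk'}$). Once those identities are in hand, the homomorphism property collapses to associativity/idempotency of $\min$ and is routine; I would present it as a short case analysis rather than four separate long displays.
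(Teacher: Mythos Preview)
Your proposal is correct and follows essentially the same route as the paper: a case analysis over the possible factor types in $\mathfrak{I}(\mathbb{S}^d)$, relying on the composition identities \eqref{equ:prime_ideal_1}--\eqref{equ:prime_ideal_2} and the discrete orthonormality of the $Y_{\ell k}^{(d)}$. The only presentational differences are that the paper assumes without loss of generality that all indices are $\le n$ (so that $\mathcal{L}_n$ acts as the identity on each factor), whereas you keep general indices and phrase everything uniformly via $\min$; and you explicitly note linearity and nontriviality, which the paper leaves implicit.
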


\begin{proof}
Let $\mathcal{H}_{m_1}^{\lambda}, \mathcal{H}_{m_2}^{\lambda}, \mathcal{L}_{s_1},\mathcal{L}_{s_2} \in \mathfrak{I}(\mathbb{S}^d)$. Without loss of generality, we assume $m_i,s_i \leq n$ for $i=1,2$.
Elementary calculations yield the following three cases
\begin{enumerate}
    \item[(i)] for $\mathcal{L}_{s_1},\mathcal{L}_{s_2}$, we have
               \begin{equation*}
                \mathcal{L}_n(\mathcal{L}_{s_1}\mathcal{L}_{s_2})= \left\{\begin{array}{ll}
                    \mathcal{L}_n(\mathcal{L}_{s_1}) = \mathcal{L}_{s_1} = \mathcal{L}_n(\mathcal{L}_{s_1}) \mathcal{L}_n(\mathcal{L}_{s_2}), & s_1 < s_2, \\
                    \mathcal{L}_n(\mathcal{L}_{s_2}) = \mathcal{L}_{s_2} = \mathcal{L}_n(\mathcal{L}_{s_1}) \mathcal{L}_n(\mathcal{L}_{s_2}), & s_1 \geq s_2.
                \end{array}  \right.
               \end{equation*} 
    \item[(ii)] for $\mathcal{L}_{s_1},\mathcal{H}_{m_1}^{\lambda}$, we have
    \begin{equation*}
        \mathcal{L}_n(\mathcal{L}_{s_1}\mathcal{H}_{m_1}^{\lambda})= \left\{\begin{array}{ll}
            \mathcal{L}_n(\mathcal{H}_{m_1}^{\lambda}) = \mathcal{H}_{m_1}^{\lambda} = \mathcal{L}_n(\mathcal{L}_{s_1}) \mathcal{L}_n(\mathcal{H}_{m_1}^{\lambda}), & m_1 < s_1, \\
            \mathcal{L}_n(\mathcal{H}_{s_1}^{\lambda}) = \mathcal{H}_{s_1}^{\lambda} = \mathcal{L}_n(\mathcal{L}_{s_1}) \mathcal{L}_n(\mathcal{H}_{m_1}^{\lambda}), & m_1 \geq s_1.
        \end{array}  \right.
       \end{equation*} 
       The above relation for $\mathcal{H}_{m_1}^{\lambda}\mathcal{L}_{s_1}$ also holds since hard thresholding operator and hyperinterpolation operator are commutative.
    \item[(iii)] for $\mathcal{H}_{m_1}^{\lambda},\mathcal{H}_{m_2}^{\lambda}$, we have
    \begin{equation*}
        \mathcal{L}_n(\mathcal{H}_{m_1}^{\lambda}\mathcal{H}_{m_2}^{\lambda})= \left\{\begin{array}{ll}
            \mathcal{L}_n(\mathcal{H}_{m_1}^{\lambda}) = \mathcal{H}_{m_1}^{\lambda} = \mathcal{L}_n(\mathcal{H}_{m_1}^{\lambda}) \mathcal{L}_n(\mathcal{H}_{m_2}^{\lambda}), & m_1 < m_2, \\
            \mathcal{L}_n(\mathcal{H}_{m_2}^{\lambda}) = \mathcal{H}_{m_2}^{\lambda} = \mathcal{L}_n(\mathcal{H}_{m_1}^{\lambda}) \mathcal{L}_n(\mathcal{H}_{m_2}^{\lambda}), & m_1 \geq m_2.
        \end{array}  \right.
       \end{equation*}
\end{enumerate}
Thus, we have completed the proof.
\end{proof}

\section{Final remark}
In this paper, we explored the algebraic properties of the hyperinterpolation class on the unit sphere $\mathbb{S}^d \subset \mathbb{R}^{d+1}$, extending concepts from functional analysis into the hyperinterpolation framework under classical quadrature exactness assumptions \eqref{equ:semiinner}. Our main contributions can be summarized in two key aspects:

\begin{enumerate}
    \item[$\bullet$] \textbf{Establishment of Hyper Projection Operators:} We developed a theory of hyper projection operators within the context of a semi-inner product, laying a solid foundation for understanding the projection properties of the hyperinterpolation class.
    \item[$\bullet$] \textbf{Introduction of Hyper Semigroup:} We introduced the concept of hyper semigroup and demonstrated that the hyper semigroup containing hard thresholding hyperinterpolation operators forms the minimal prime hyper ideal within the broader hyper semigroup that includes both hard thresholding hyperinterpolation and classical hyperinterpolation operators. This elucidates the algebraic structures underlying the hyperinterpolation class.
\end{enumerate}

Looking ahead, leveraging the Marcinkiewicz-Zygmund inequality \cite{dai2010MZ,Filbir2011MZ,Mhaskar2001MZ,Zygmund1977trigonometric} offers a promising avenue for further exploration of the algebraic properties of the hyperinterpolation class, particularly in scenarios where quadrature exactness assumptions may not hold \cite{an2024bypassing}. By relaxing these assumptions, we can extend our understanding and applicability of hyperinterpolation.

Furthermore, future research could explore the underlying algebraic structures of hyper semigroups, as the intersection of semigroup theory \cite{Grillet1995Semigroups} and hyperinterpolation remains largely unexplored in the literature.

\section*{Acknowledgments}
The first author (C. An) of the research is partially supported by National Natural Science Foundation of China (No. 12371099) and Special Posts of Guizhou University (No. [2024]42).  The authors express their sincere thanks to Prof. Feng Dai for his helpful suggestions.

\bibliographystyle{siamplain}
\bibliography{refferences}
\end{document}